\newtheorem{theorem}{Theorem}[section]
\newtheorem{lemma}[theorem]{Lemma}
\newtheorem{corollary}[theorem]{Corollary}
\newtheorem{proposition}[theorem]{Proposition}
\theoremstyle{definition}
\newtheorem{example}[theorem]{Example}
\newtheorem{definition}[theorem]{Definition}
\newtheorem{remark}[theorem]{Remark}
\newcommand{\Z}{{\mathbb Z}}
\newcommand{\N}{{\mathbb N}}
\newcommand{\R}{{\mathbb R}}
\newcommand{\C}{{\mathbb C}}
\def\kal#1{\mathcal{K}_{#1}}   %Kalton map
\def\hull#1{\langle{#1}\rangle} % used instead the formal G_n
\def\rhull#1{\langle{#1}\rangle_\R}
\def\norm#1{\left|\left|{#1}\right|\right|}
\def\uar#1#2{\mathcal{U}_{\arg}({#1},{#2})}
\def\uab#1#2{\mathcal{U}_{\mathrm{abs}}({#1},{#2})}
\def\u#1#2{\mathcal{U}\left({#1},{#2}\right)}
\renewcommand{\S}{\mathbb{S}}
\title{Topologically independent sets in topological groups and vector spaces}
\author[J. Sp\v{e}v\'ak]{Jan Sp\v{e}v\'ak}
\address[Jan Sp\v{e}v\'ak]{Department of Applied Mathematics\\ Faculty of Information Technology\\ Czech Technical University in Prague\\ Th\'akurova 9, 16000, Prague 6\\ Czech Republic} \email{spevajan@fit.cvut.cz}
\thanks{The work on this paper was supported by the Czech Science Foundation (GA\v{C}R) grant 22-32829S}
\begin{document}

\begin{abstract}
We study topological versions of an independent set in an abelian group and a linearly independent set in a vector space, a {\em topologically independent set} in a topological group and a {\em topologically
linearly independent set} in a topological vector space. These counterparts of their algebraic versions
are defined analogously and possess similar properties.

Let $\C^\times$ be the multiplicative group of the field of complex numbers with its usual topology. 
We prove that a subset $A$ of an arbitrary Tychonoff power of $\C^\times$ is topologically independent if and only if the topological subgroup $\hull{A}$ that it generates is the Tychonoff direct sum $\bigoplus_{a\in A}\hull{a}$.

This theorem substantially generalizes an earlier result of the author, who has proved this for Abelian precompact groups.

Further, we show that topologically independent and topologically linearly independent sets coincide
in vector spaces with weak topologies, although they are different in general.

We characterize topologically linearly independent sets in vector spaces with weak topologies and normed spaces. In a weak topology, a set $A$ 
 is topologically linearly independent if
and only if its linear span is the Tychonoff direct sum  $\R^{(A)}$. In normed spaces $A$ is topologically linearly independent if and only if it is uniformly minimal. Thus, from the point of view of topological
linear independence, the Tychonoff direct sums $\R^{(A)}$ and (linear spans of) uniformly minimal sets, which are closely related to bounded biorthogonal systems, are of the same essence.
\end{abstract}

\subjclass[2020]{22A05, 46A35, 46B15.}

\keywords{direct sum, Tychonoff direct sum, topologically independent, topologically linearly independent, bounded biorthogonal system, uniformly minimal}

\maketitle

\def\rank#1{trank(#1)}

Throughout this paper, we consider four categories of objects and their morphisms: Abelian groups and homomorphisms, vector spaces over the field $\R$ of real numbers and linear maps, (Abelian) Hausdorff topological groups and continuous homomorphisms, and (Hausdorff) topological vector spaces with continuous linear maps. 

Given a subset $A$ of a group $G$, we use the symbol $\hull{A}$ to denote the subgroup of $G$ generated by $A$. If $G$ is also a vector space, then $\rhull{A}$ stays for the vector subspace generated by $A$. For simplicity, we write $\hull{a}$ and $\rhull{a}$ rather than $\hull{\{a\}}$ and $\rhull{\{a\}}$, respectively, for all $a\in G$. The neutral (zero) element of a group or a vector space $G$ is denoted by $e_G$ or $0_G$.

Given a nonempty set $I$ and a family $\{G_i\colon i\in I\}$ of abelian groups, the symbol $\bigoplus_{i\in I}G_i$ stays for the direct sum of this family. It is a group with the group operations defined coordinate-wise. Suppose $G_i$ is a topological group for each $i\in I$. In that case, we endow this direct sum with the subspace topology inherited from the product $\prod_{i\in I}G_i$ endowed with the Tychonoff product topology. We call $\bigoplus_{i\in I}G_i$ a Tychonoff direct sum, a topological group. Similarly, if $G_i$ is a topological vector space for every $i\in I$, the latter Tychonoff direct sum becomes a topological vector space. 
If $G_i=G$ for all $i\in I$, we denote the (Tychonoff) direct sum by $G^{(I)}$.

Following \cite{DSS}, we define the Kalton map $\kal{A}$ associated with $A$ to be the unique group homomorphism $$\kal{A}\colon\bigoplus_{a\in A}\hull{a}\to G,$$
 which extends each natural inclusion map $\hull{a}\to G$ for $a\in A$.

The symbol $\C^\times$ denotes the multiplicative group of the field of complex numbers with its usual Euclidean topology, by $\S$  we denote its compact subgroup of all complex numbers $z$ with $|z|=1$, and $\R_+$ stays for the topological subgroup of $\C^\times$ consisting of positive real numbers. The symbol $\Z$ denotes the discrete group of integers, and $\N$ is the set of natural numbers.

\section{Introduction}

One of the fundamental concepts in the theory of abelian groups is that of an independent set. A subset $A$ of nonzero elements of a group $G$ is {\em independent}, provided that for every finite subset $B\subseteq A$ and every indexed set $\{z_b\colon b\in B\}$ of integers, the equality $\sum_{b\in B}z_bb=e_G$ implies that $z_bb=e_G$ for all $b\in B$.

When we replace the algebraic relation of ``being equal to zero'' with a topological notion of ``being close to zero,'' we obtain a natural group-topological counterpart of an independent set from \cite[Definition 4.1]{DSS}:

\begin{definition}
A subset $A$ of nonzero elements of a topological group $G$ is {\em topologically independent} if, for every neighborhood $W$ of $e_G$ there is a neighborhood $U$ of $e_G$ such that for every finite subset $B\subseteq A$ and every indexed set $\{z_b\colon b\in B\}$ of integers, the inclusion $\sum_{b\in B}z_bb\in U$ implies that $z_bb\in W$ for all $b\in B$. The neighborhood $U$ is called a {\em $W$-witness} of the topological independence of $A$.
\end{definition}

Similarly, in vector spaces, we can replace the integer linear combination in the definition of an independent set with a linear combination with real numbers as coefficients to obtain one of the key notions of linear algebra -- a linearly independent set. The definition of its topological counterpart was anticipated in \cite[Remark 4.13]{DSS}:
\begin{definition}\label{def:top:lin:ind}
A subset $A$ of a topological vector space $X$ is {\em topologically linearly independent} provided that for every neighborhood $W$ of $e_X$ there exists a neighborhood $U$ of $e_X$ such that for every finite $F\subset A$ and every indexed set $\{r_a\colon a\in F\}$ of reals, the inclusion $\sum_{a\in F}r_aa\in U$ implies that $r_aa\in W$ for all $a\in F$. We call this neighborhood $U$ a {\em $W$-witness} of the topological linear independence of $A$.
\end{definition}
In our paper, we explore the two previously defined notions. In the realm of topological vector spaces, where we have both, we compare them.

It is a folklore and simple fact that a subset $A$ of a group is independent if and only if the Kalton map $\kal{A}$ is an isomorphic embedding. That is, $A$ is independent if and only if $\hull{A}$ is (canonically isomorphic to) the direct sum $\bigoplus_{a\in A}\hull{a}$. The topological counterpart of this fact was proved in \cite{Spe} for precompact\footnote{Recall that a topological group is {\em precompact} if it is (isomorphic to) a subgroup of a compact group.} groups:

\begin{theorem}\cite[Theorem 1.1]{Spe}\label{fact:precompact}
 A subset $A$ of a precompact group is topologically independent if and only if $\hull{A}$ is the Tychonoff direct sum $\bigoplus_{a\in A}\hull{a}$.    
\end{theorem}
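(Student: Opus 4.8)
The plan is to show that $A$ is topologically independent exactly when the Kalton map $\kal{A}\colon\bigoplus_{a\in A}\hull{a}\to\hull{A}$ is a topological isomorphism onto $\hull{A}$ (carrying the subspace topology from $G$). Throughout I compare two topologies on the common underlying group $\hull{A}$: the subspace topology $\tau_G$ from $G$, and the Tychonoff direct sum topology $\tau_\oplus$ transported along $\kal{A}$; the goal is $\tau_G=\tau_\oplus$. I record first that topological independence forces algebraic independence (if $\sum_{b\in B}z_bb=e_G$ then this sum lies in every $U$, so each $z_bb$ lies in every $W$, hence $z_bb=e_G$ by the Hausdorff axiom), so $\kal{A}$ is in any case a bijective homomorphism onto $\hull{A}$ and only the topological content remains.

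For the inclusion $\tau_\oplus\subseteq\tau_G$ (equivalently, continuity of $\kal{A}^{-1}$) I would argue directly from the definition: since $\tau_\oplus$ is the initial topology generated by the coordinate projections $p_a\colon\hull{A}\to\hull{a}$, $\sum_bz_bb\mapsto z_aa$, it suffices to check each $p_a$ is $\tau_G$-continuous, and this is precisely what a $W$-witness $U$ supplies (it forces $p_a(\sum_bz_bb)=z_aa\in W$ whenever $\sum_bz_bb\in U$); no compactness is needed. The reverse inclusion $\tau_G\subseteq\tau_\oplus$ (continuity of $\kal{A}$) is the crux, and it is where precompactness enters. The key intermediate claim is a \emph{shrinking} property: for every neighbourhood $W$ of $e_G$ there is a finite $F_0\subseteq A$ with $\hull{a}\subseteq W$ for all $a\notin F_0$. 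I would prove its contrapositive. If shrinking fails for some open $W$, choose distinct $a_1,a_2,\dots\in A$ and integers $z_k$ with $c_k:=z_ka_k\notin W$. Embedding $G$ into its compact completion $K$, the sequence $(c_k)$ has a subnet converging in $K$, so the differences $c_k-c_{k'}$, which are integer combinations over $\{a_k,a_{k'}\}$, tend to $e_G$ and thus eventually enter any prescribed $U$, while each term $c_k=z_ka_k$ stays outside $W$; hence no $U$ is a $W$-witness and $A$ is not topologically independent. This compactness/subnet step is the heart of the argument and the place I expect to be the main obstacle, precisely because the hypothesis cannot be dropped here.

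Granting shrinking, I would upgrade it to continuity of $\kal{A}$ using that a precompact topology is generated by the continuous characters $\chi\colon G\to\T$. Applying shrinking to the basic neighbourhoods $\{x:|\chi(x)|<1/4\}$ shows that each such $\chi$ vanishes on $\hull{a}$ for all but finitely many $a$ (otherwise some integer multiple of $a$ would leave that neighbourhood, since consecutive multiples of a nonzero element of $\T$ differ by at most $1/2$ and so cannot all remain within $1/4$ of $0$). Hence, given a basic $W=\{x:|\chi_j(x)|<\varepsilon,\ j\le n\}$, the finite set $F_0$ collecting the supports of $\chi_1,\dots,\chi_n$ on $A$ lets me take the $\tau_\oplus$-neighbourhood $\mathcal U$ restricting only the coordinates in $F_0$ to be small: for finitely supported $(x_a)\in\mathcal U$ each value $\chi_j(\sum_ax_a)=\sum_{a\in F_0}\chi_j(x_a)$ is a genuinely finite sum, which the choice of $\mathcal U$ keeps below $\varepsilon$. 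Thus $\kal{A}(\mathcal U)\subseteq W$, so $\kal{A}$ is continuous and $\tau_G=\tau_\oplus$.

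Finally, for the converse (that a Tychonoff direct sum is topologically independent) no compactness is required and the argument is purely formal: from $\kal{A}$ being a topological isomorphism one reads off both the $\tau_G$-continuity of the projections $p_a$ and the shrinking property (by testing $\kal{A}$ on single-coordinate elements supported at $a\notin F_0$). Given $W$, I would then combine the finitely many $a\in F_0$ (handled by continuity of the corresponding $p_a$, intersecting the resulting neighbourhoods) with the cofinitely many $a\notin F_0$ (handled by $\hull{a}\subseteq W$) to produce a single $W$-witness $U=\bigcap_{a\in F_0}U_a$. I would present this half briefly, since it merely dualises the bookkeeping of the harder direction.
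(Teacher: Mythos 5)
Your proposal is correct, but it takes a genuinely different route from the one behind the paper. The paper does not reprove this statement: it cites \cite[Theorem 1.1]{Spe}, and the method underlying both that proof and the paper's generalization (Theorem \ref{thm:mainone}) is to verify that a topologically independent set is \emph{absolutely Cauchy summable} and then invoke \cite[Theorem 5.1]{DSS}, i.e.\ Theorem \ref{fact:from:DSS}. You instead prove the Kalton-map isomorphism by hand: openness of $\kal{A}$ from the witnesses (this is exactly \cite[Proposition 4.7(i)]{DSS}, and you are right that no compactness is needed there), then a ``shrinking'' property via the pigeonhole argument in the compact completion (the same idea as \cite[Lemma 2.3]{Spe}, which is the shared core of both proofs), and finally continuity of $\kal{A}$ via the fact that a precompact Hausdorff abelian topology is the initial topology of its continuous characters. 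Two remarks on this last step. First, that fact is the Comfort--Ross theorem and is the one nontrivial external input you use without attribution; you should either cite it or derive it by passing to the compact closure $K$ of $G$ and using that the characters of a compact abelian group determine its topology. Second, your character argument actually upgrades your shrinking property to full absolute Cauchy summability for free: a character that kills every $\hull{a}$ with $a\notin F_0$ kills the whole subgroup $\hull{A\setminus F_0}$, so $\hull{A\setminus F_0}\subseteq\bigcap_j\ker\chi_j\subseteq W$; at that point you could simply quote Theorem \ref{fact:from:DSS} and delete your hand-made proof of both continuity of $\kal{A}$ and the converse implication. What the two approaches buy: yours is self-contained (modulo Comfort--Ross) and makes explicit where precompactness is genuinely used; the paper's factorization through absolute Cauchy summability is shorter and reusable, which is precisely what lets the paper replace $\S^I$ by $(\C^\times)^I$ in Theorem \ref{thm:mainone}, where no character description of the topology in terms of $\T$ is available. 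One small technical point: when you extract two terms of the subnet with small difference, you should note that the subnet takes infinitely many distinct values of $k$, so you can indeed choose $k\neq k'$, making $\{a_k,a_{k'}\}$ a genuine two-element subset of $A$; as stated this is a one-line fix, not a gap.
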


However, in general, the topological independence of a set $A$ only implies that the Kalton map $\kal{A}$ is open and injective (\cite[Proposition 4.7(i)]{DSS}) while it does not need to be continuous, as demonstrated in \cite[Example 3.1]{Spe}. This example uses the simple fact that independent and topologically independent sets coincide in discrete groups. Thus, any infinite independent subset $A$ of a discrete group is topologically independent. Nevertheless, Tychonoff direct sums of infinitely many groups are never discrete and therefore the discrete group $\hull{A}$ is unable to be the Tychonoff direct sum $\bigoplus_{a\in A}\hull{a}$. Since discrete groups are locally compact, this example demonstrates that local compactness cannot replace precompactness  in Theorem \ref{fact:precompact}. 

In our manuscript, we substantially generalize Theorem \ref{fact:precompact} in Theorem \ref{thm:mainone} by showing that a precompact group from Theorem \ref{fact:precompact} can be replaced by an arbitrary subgroup of $(\C^\times)^I$, where $I$ is any nonempty set. This is a generalization as every precompact group is (isomorphic to) a subgroup of $\S^I$ for an appropriate set $I$. Our result has several consequences. 

In Corollary \ref{cor:comp:generated}, we point out that precompact groups in Theorem \ref{fact:precompact} can be replaced by locally compact compactly generated groups as all such groups are subgroups of $(\C^\times)^I$ for a suitable set $I$. This positively answers the question from \cite[Remark 3.2]{Spe}. 

A further consequence is Theorem \ref{thm:positive:problem:solution:for:weak:top}, which completely resolves the relation of topological independence and Tychonoff direct sums in the realm of the underlying groups of topological vector spaces. It states that given a topologically independent subset $A$ of a topological vector space, the topological group $\hull{A}$ is the Tychonoff direct sum $\bigoplus_{a\in A}\hull{a}$ if and only if the topology of $\rhull{A}$ is weak. 

Another consequence of Theorem \ref{thm:mainone} is given in Section \ref{sec:top:lin:indep}, where we investigate various types of topological independence in topological vector spaces.
Namely, in Theorem \ref{thm:top:ind:top:lin:ind}, we show, among other things, that in topological vector spaces equipped with a weak topology, topologically independent sets coincide with topologically linearly independent sets. This is because, for a topologically (linearly) independent set in the presence of a weak topology, the topological vector space $\rhull{A}$ is simply the Tychonoff direct sum $\R^{(A)}$. Thus, we clarify the notion of topological linear independence for weak topologies. The topological and topological linear independence generally do not coincide, as demonstrated in Example \ref{ex:top:ind:not:top:lin:ind}. 

In the following, we turn to the situation in normed spaces. In Theorem \ref{thm:top:lin:ind:iff:unif:minimal}, we show that topologically linearly independent sets coincide with uniformly minimal sets in normed spaces. Uniform minimality is essential in studying bounded biorthogonal systems and Markushevich bases (see \cite{H, Ver}). Thus, by a simple topologization of linear independence in Definition \ref{def:top:lin:ind}, we link Tychonoff direct sums of the form $\R^{(A)}$ in topological vector spaces with weak topologies and uniformly minimal sets $A$ in normed (Banach) spaces. They are of the same nature -- the set $A$ is topologically linearly independent.

As an application of topological linear independence, we show in Theorem \ref{thm:free:basis:top:lin:ind:l1} that if a free basis $A$ of a Banach space $X$ recently introduced in \cite{PS2} is topologically linearly independent when we remove its base point, then $X$ is, in fact, isomorphic to the classical Banach space $\ell^1(A)$, and $A\setminus\{e_X\}$ is its (long) Schauder basis.

\section{Topological independence and Tychonoff direct sums}
The main result of this section is Theorem \ref{thm:mainone}. It enlarges the known class of topological groups, in which every topologically independent set $A$ generates the Tychonoff direct sum $\bigoplus_{a\in A}\hull{a}$, from the class of precompact groups (Theorem \ref{fact:precompact}) to the class of arbitrary subgroups of the topological group $(\C^\times)^I$, where $I$ is an arbitrary nonempty set and $(\C^\times)^I$ is equipped with the Tychonoff product topology. Its proof is based on the following result.
\begin{theorem}\cite[Theorem 5.1]{DSS}\label{fact:from:DSS}
A subset $A$ of a topological group $G$ is simultaneously topologically independent and absolutely Cauchy summable if and only if 
$\hull{A}$ is the Tychonoff direct sum $\bigoplus_{a\in A}\hull{a}$. 
\end{theorem}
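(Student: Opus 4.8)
The plan is to reformulate both sides of the equivalence as properties of the Kalton map $\kal{A}\colon\bigoplus_{a\in A}\hull{a}\to G$, writing $D=\bigoplus_{a\in A}\hull{a}$ for its domain. By construction $\kal{A}$ is a surjective homomorphism onto $\hull{A}$, so the assertion that $\hull{A}$ is the Tychonoff direct sum $\bigoplus_{a\in A}\hull{a}$ is precisely the assertion that $\kal{A}$ is a topological isomorphism of $D$ onto $\hull{A}$ (with the subspace topology); equivalently, that $\kal{A}$ is injective, open, and continuous. I would therefore aim to prove two reformulations: first, that $\kal{A}$ is continuous if and only if $A$ is absolutely Cauchy summable; and second, to understand how injectivity and openness of $\kal{A}$ relate to topological independence.

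For the continuity reformulation, unwind the product-subspace topology on $D$: a basic neighborhood of $0$ constrains only finitely many coordinates, say those in a finite set $F\subseteq A$, leaving the rest free. Continuity of $\kal{A}$ at $0$ then says exactly that for every neighborhood $W$ of $e_G$ there is a finite $F$ with $\sum_{b\in B}z_bb\in W$ for every finite $B\subseteq A\setminus F$ and every integer family $\{z_b\}$, which is absolute Cauchy summability. In particular, specializing to singletons gives $\hull{b}\subseteq W$ for all but finitely many $b$; this \emph{tail control} is the feature I will exploit below.

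With these reformulations in hand, the forward implication is quick. If $A$ is topologically independent then $\kal{A}$ is injective and open by \cite[Proposition 4.7(i)]{DSS}, and if $A$ is absolutely Cauchy summable then $\kal{A}$ is continuous by the previous paragraph; a continuous, open, injective surjection onto $\hull{A}$ is a topological isomorphism, so $\hull{A}$ is the Tychonoff direct sum.

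The converse is where the real work lies, and I expect it to be the main obstacle. Assuming $\hull{A}$ is the Tychonoff direct sum, $\kal{A}$ is a topological isomorphism onto its image; continuity immediately yields absolute Cauchy summability, so it remains to extract topological independence. The difficulty is that topological independence is a uniform, all-coordinates-at-once condition (in the language above it says $\kal{A}^{-1}$ is continuous into the \emph{box} topology on $D$), whereas openness of $\kal{A}$ provides only control of finitely many coordinates at a time (continuity of $\kal{A}^{-1}$ into the product topology), and these topologies on $D$ genuinely differ. To bridge the gap I would fix a neighborhood $W$ of $e_G$, use absolute Cauchy summability to choose a finite $F$ with $\hull{b}\subseteq W$ for every $b\notin F$, and then apply openness to the basic neighborhood of $0$ in $D$ that constrains exactly the coordinates in $F$ to lie in $W$. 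This produces a neighborhood $U$ of $e_G$ with $U\cap\hull{A}\subseteq\kal{A}(N)$; for $x=\sum_{b\in B}z_bb\in U$, injectivity identifies $\kal{A}^{-1}(x)=(z_aa)_a$ as the unique preimage in $N$, forcing $z_bb\in W$ for $b\in F$, while the tail control handles $b\notin F$ automatically since there $z_bb\in\hull{b}\subseteq W$. Thus $z_bb\in W$ for all $b$, so $U$ is a $W$-witness of topological independence, completing the argument.
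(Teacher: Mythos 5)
The paper does not actually prove this statement---it is quoted verbatim from \cite[Theorem 5.1]{DSS}---so there is no in-paper argument to compare against; what you have done is reconstruct the cited proof. Your reconstruction is correct and follows the natural (and essentially the original) route: identify ``$\hull{A}$ is the Tychonoff direct sum $\bigoplus_{a\in A}\hull{a}$'' with ``the Kalton map $\kal{A}$ is a topological isomorphism onto $\hull{A}$,'' invoke \cite[Proposition 4.7(i)]{DSS} (which this paper also quotes) for ``topologically independent $\Rightarrow$ $\kal{A}$ injective and open,'' establish ``absolutely Cauchy summable $\Leftrightarrow$ $\kal{A}$ continuous,'' and combine. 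Your converse extraction of topological independence is the right argument and is carried out correctly: apply openness to the basic neighborhood $N$ of $0$ constraining exactly the coordinates in the finite set $F$ supplied by absolute Cauchy summability, use injectivity to identify the unique preimage of $\sum_{b\in B}z_bb$ in $N$ as the canonical tuple $(z_aa)_a$, and let the tail control $\hull{b}\subseteq\langle A\setminus F\rangle\subseteq W$ handle the coordinates outside $F$.

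One small imprecision is worth fixing. Continuity of $\kal{A}$ at $0$ is not \emph{literally} the displayed tail condition, because the image of a basic neighborhood of $0$ in $\bigoplus_{a\in A}\hull{a}$ is $\sum_{a\in F}\left(V_a\cap\hull{a}\right)+\langle A\setminus F\rangle$, not just $\langle A\setminus F\rangle$; your unwinding gives only the implication ``continuity $\Rightarrow$ absolute Cauchy summability.'' The direction you actually need in the forward half of the theorem (absolute Cauchy summability $\Rightarrow$ continuity) requires the routine splitting: choose a neighborhood $W_1$ of $e_G$ with $W_1+W_1\subseteq W$, take finite $F$ with $\langle A\setminus F\rangle\subseteq W_1$, and shrink the finitely many coordinate neighborhoods $V_a$ so that $\sum_{a\in F}V_a\subseteq W_1$, using continuity of the $|F|$-fold sum. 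With that one line added, your proof is complete.
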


Recall that, as in \cite{DSS}, a subset $A$ of a topological group $G$ is  {\em absolutely Cauchy summable\/} provided that for every neighborhood $V$ of the neutral element, there exists a finite set $F\subseteq A$ such that    
\begin{equation}\label{eq5}
\langle A\setminus F\rangle\subset V.
\end{equation}
Thus, Theorem \ref{fact:from:DSS} allows us to reduce the proof of Theorem \ref{thm:mainone} to show that every topologically independent subset $A$ of a subgroup of $(\C^\times)^I$ is absolutely Cauchy summable. This, in turn, is equivalent to the fact that the set $\{a\in A\colon a(i)\neq 1\}$ is finite for every $i\in I$ (see the proof of Theorem \ref{thm:mainone}), and it is proved in Lemma \ref{lemma:main}.

We will use the multiplicative notation to denote operations of the topological group $(\C^\times)^I$. In order to describe its topology, let us set the following notation. Given $M\subseteq I$  and $\varepsilon\in \R$, put $$\uar{M}{\varepsilon}\colon=\left\{f\in(\C^\times)^I\colon\left|\frac{f(i)}{|f(i)|}-1\right|<\varepsilon \mbox{ for all } i\in M\right\},$$
to denote the subset of $(\C^\times)^I$, which consists of elements with arguments at each coordinate $i\in M$ less than $\varepsilon$ far from $1$. Similarly, put $$\uab{M}{\varepsilon}\colon=\left\{f\in(\C^\times)^I\colon\left|{|f(i)|}-1\right|<\varepsilon \mbox{ for all } i\in M\right\},$$
to denote the subset of $(\C^\times)^I$, which consists of elements with absolute value at each coordinate $i\in M$ less than $\varepsilon$ far from $1$. Finally, define $$\mathcal{U}(M,\varepsilon)=\uar{M}{\varepsilon}\cap\uab{M}{\varepsilon}. $$

In this notation, the set $$\{\mathcal{U}(M,\varepsilon)\colon M\subset I \mbox{ is finite}, \varepsilon>0\}$$
forms a local base at the neutral element of the topology of $(\C^\times)^I$.

Lemma \ref{lemma:bourbaki} is a reformulation of the classical fact that the subgroups of the topological group $\R^n$ that have a rank greater than $n$ are nondiscrete.  
\begin{lemma}\label{lemma:bourbaki}
    Let $M$ be a finite nonempty set and $F$ a finite set of natural numbers. For every $j\in F$, fix $a_j\in\R^M$ arbitrarily. If the cardinality of $F$ is strictly greater than the cardinality of $M$, then for every neighborhood $U$ of zero in $\R^M$, there is $n\in F$, and for each $j\in F$, an integer $z_j$  such that
    \begin{equation}\label{eq:small:integer:combination}
    \sum_{j\in F}z_ja_j\in U, \mbox{ and } z_n\neq0.
    \end{equation}
\end{lemma}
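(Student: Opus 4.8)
The plan is to recast the conclusion in terms of the combination homomorphism. Define $\phi\colon\Z^F\to\R^M$ by $\phi\big((z_j)_{j\in F}\big)=\sum_{j\in F}z_ja_j$; it is a group homomorphism whose image is the subgroup $H=\hull{\{a_j\colon j\in F\}}$ of the additive group $\R^M$. Since requiring $z_n\neq0$ for some $n\in F$ is the same as requiring the integer tuple $(z_j)_{j\in F}$ to be nonzero, the lemma asserts exactly that every neighborhood $U$ of $0$ meets $\phi\big(\Z^F\setminus\{0\}\big)$. First I would record that $H$, being a finitely generated subgroup of the torsion-free group $\R^M$, is free abelian of some finite rank $r\le|F|$.

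Next I would split into two cases according to whether $\phi$ is injective. If $\phi$ is not injective, I would simply pick a nonzero element $(z_j)_{j\in F}$ of its kernel; then $\sum_{j\in F}z_ja_j=0\in U$ and some $z_n\neq0$, so the conclusion holds for every $U$ at once. If $\phi$ is injective, then $H\cong\Z^F$ is free abelian of rank $|F|$, and the hypothesis $|F|>|M|$ says precisely that $H$ is a subgroup of $\R^M\cong\R^{|M|}$ whose rank exceeds the dimension.

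The crux is then the classical structure theorem for subgroups of $\R^n$: a \emph{discrete} subgroup of $\R^n$ is free abelian of rank at most $n$ (it is a lattice spanned by $\R$-linearly independent vectors). Quoting this, since $\operatorname{rank}H=|F|>|M|$, the subgroup $H$ cannot be discrete, so $0$ is an accumulation point of $H$ and every neighborhood $U$ of $0$ contains some $h\in H\setminus\{0\}$. Writing $h=\sum_{j\in F}z_ja_j$ and observing that $h\neq0$ forces $z_n\neq0$ for at least one $n\in F$ completes the proof. I expect the only genuine ingredient to be this discreteness dichotomy, which is exactly the ``Bourbaki'' fact named in the statement; a fully self-contained alternative would replace it by Dirichlet's simultaneous approximation theorem applied to a nontrivial real dependence $\sum_{j\in F}c_ja_j=0$ (which exists because $|F|>|M|$), approximating the $c_j$ by integers so as to land a nonzero combination inside $U$. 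Controlling the approximation error while keeping the resulting tuple nonzero is the delicate point of that route, so invoking the structure theorem is the cleaner path.
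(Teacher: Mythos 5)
Your proposal is correct and follows essentially the same route as the paper: the same dichotomy (a nontrivial integer relation, i.e.\ $\ker\phi\neq 0$, versus injectivity of $\phi$), and the same key classical fact, since ``discrete subgroups of $\R^n$ are lattices of rank at most $n$'' is just the contrapositive form of the Bourbaki result the paper cites to conclude that $\hull{S}$ is nondiscrete. Your reformulation via the homomorphism $\phi\colon\Z^F\to\R^M$ is a clean packaging of the paper's case analysis, but not a genuinely different argument.
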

\begin{proof}
   Fix a neighborhood $U$ of zero in $\R^M$.  There are two cases. 
   
   In the first case, there are integers $z_j$ for $j\in F$, not all equal zero, such that $$\sum_{j\in F}z_ja_j=0_{\R^M}.$$ This means that $a_j=a_k$ for some distinct $j,k\in F$ or the set $\{a_j\colon j\in F\}$ is not independent. In this case, \eqref{eq:small:integer:combination} holds for some $n\in F$. 

   In the other case, the set $S\colon=\{a_j\colon j\in F\}$ is independent, and $|F|=|S|$. Therefore, $|S|>|M|$. Consequently, it follows from \cite[Chap. VII, Theorem 1]{Bourbaki} that the topological group $\langle S\rangle$ is nondiscrete. Hence, there is, for each $j\in F$, an integer $z_j$ such that $$\sum_{j\in F}z_ja_j\in U\setminus\{0_{\R^M}\}.$$ This gives us $n\in F$ such that \eqref{eq:small:integer:combination} holds.
\end{proof}
\begin{lemma}\label{lemma:new:universal}
    Let $I$ be a nonempty set, $(c_n)_{n\in\N}$ a sequence in the topological group $(\C^\times)^I$, $M$ a finite subset of $I$, and $\varepsilon$ a positive real number. Under these conditions,  there are distinct $k,l\in\N$ such that 
\begin{equation}\label{eq:arg:univ}
    c_kc_l^{-1}\in\uar{M}{\varepsilon}.
\end{equation}
Further, for every finite $F\subset\N$ with cardinality greater than $|M|$, there is $n\in F$ and  for every $j\in F$ an integer $z_j$ such that 
    \begin{equation}\label{eq:1:universal}
    \prod_{j\in F}c_j^{z_j}\in \uab{M}{\varepsilon}, \mbox{ and } z_n\neq 0.
\end{equation}
\end{lemma}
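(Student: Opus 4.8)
The two conclusions concern disjoint features of the topology of $(\C^\times)^I$ — the argument and the absolute value of the coordinates — so I would prove them separately, using compactness for the first and Lemma \ref{lemma:bourbaki} for the second.

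For \eqref{eq:arg:univ}, the plan is a pigeonhole argument on the compact torus $\S^M$. For each $n\in\N$ put $\phi_n=\left(c_n(i)/|c_n(i)|\right)_{i\in M}\in\S^M$. Since the phase map is multiplicative and each $\phi_l(i)$ has modulus $1$, the $i$-th phase of $c_kc_l^{-1}$ equals $\phi_k(i)\overline{\phi_l(i)}$, whence $\left|\phi_k(i)\overline{\phi_l(i)}-1\right|=|\phi_k(i)-\phi_l(i)|$. Thus $c_kc_l^{-1}\in\uar{M}{\varepsilon}$ is equivalent to $|\phi_k(i)-\phi_l(i)|<\varepsilon$ for all $i\in M$, i.e. to $\phi_k$ and $\phi_l$ lying within $\varepsilon$ in the supremum metric of $\S^M\subseteq\C^M$. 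As $\S^M$ is compact it is totally bounded, so it can be covered by finitely many balls of radius $\varepsilon/2$; since the index set $\N$ is infinite, the pigeonhole principle yields distinct $k,l\in\N$ with $\phi_k,\phi_l$ in a common ball, and the triangle inequality gives \eqref{eq:arg:univ}.

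For \eqref{eq:1:universal}, the idea is to linearize the absolute value by taking logarithms and then invoke Lemma \ref{lemma:bourbaki}. For each $j\in F$ set $a_j=\left(\log|c_j(i)|\right)_{i\in M}\in\R^M$. The $i$-th absolute value of $\prod_{j\in F}c_j^{z_j}$ is $\exp\left(\sum_{j\in F}z_j\log|c_j(i)|\right)$, i.e. the coordinatewise exponential of $\sum_{j\in F}z_ja_j\in\R^M$. Using continuity of $\exp$ at $0$, I would fix $\delta>0$ with $|t|<\delta\Rightarrow|e^t-1|<\varepsilon$ and let $U=\{x\in\R^M\colon |x(i)|<\delta\text{ for all }i\in M\}$, a neighborhood of $0_{\R^M}$. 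Because $|F|>|M|$, Lemma \ref{lemma:bourbaki} furnishes $n\in F$ and integers $z_j$ ($j\in F$) with $\sum_{j\in F}z_ja_j\in U$ and $z_n\neq0$; exponentiating each coordinate then gives $\prod_{j\in F}c_j^{z_j}\in\uab{M}{\varepsilon}$, proving \eqref{eq:1:universal}.

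The genuinely new content is just the recognition of this clean decoupling. The only point demanding care is the first part, where I must check that the multiplicative ``ratio'' condition defining $\uar{M}{\varepsilon}$ coincides with an ordinary additive distance on the torus, so that compactness and pigeonhole apply directly; the second part is essentially a translation of Lemma \ref{lemma:bourbaki} through the logarithm, with the neighborhood $U$ dictated by the continuity of the exponential.
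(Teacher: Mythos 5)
Your proof is correct and follows essentially the same route as the paper: both split into the phase part, handled by compactness of $\S^M$, and the modulus part, handled by transporting $\uab{M}{\varepsilon}$ through the (logarithmic) isomorphism $\R_+^M\cong\R^M$ and invoking Lemma \ref{lemma:bourbaki}. The only difference is cosmetic: where the paper cites \cite[Lemma 2.3]{Spe} for the compactness step (and distinguishes whether the projected points are distinct), you inline the total-boundedness pigeonhole argument directly on indices, which also removes the need for that case distinction.
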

\begin{proof}
    To find distinct $k,l\in\N$ such that \eqref{eq:arg:univ} holds, let $\pi\colon (\C^\times)^I\to \S^M$ be the projection defined as $$\pi(c)=\left(\frac{c(i)}{|c(i)|}\right)_{i\in M}$$ for all $c\in(\C^\times)^I$. If $\pi(c_k)=\pi(c_l)$ for some distinct $k,l\in\N$, then $k$ and $l$ satisfy \eqref{eq:arg:univ}. Otherwise, the set $\{\pi(c_k)\colon k\in\N\}$ is an infinite subset of the compact group $\S^M$. Since $\pi(\uar{M}{\varepsilon})$ is a neighborhood of the neutral element of $\S^M$, we can apply \cite[Lemma 2.3]{Spe}, whose proof is simple and straightforward, to find distinct $a,b\in\{\pi(c_k)\colon k\in\N\}$ such that $ab^{-1}\in\pi(\uar{M}{\varepsilon})$. Pick $k,l\in\N$ such that $\pi(c_k)=a$ and $\pi(c_l)=b$ and observe that \eqref{eq:arg:univ} is satisfied.

    Furthermore, arbitrarily fix a finite $F\subset\N$ of cardinality greater than $|M|$. To find $n\in F$ and integers $z_j$ for $j\in F$ such that \eqref{eq:1:universal} holds, let $\sigma\colon (\C^\times)^I\to \R_+^M$ denote the projection defined as $$\sigma(c)=(|c(i)|)_{i\in M}$$ for all $c\in (\C^\times)^I$. Since the multiplicative topological group $\R_+$ and the additive topological group $\R$ are isomorphic, we can fix an isomorphism $\varphi\colon\R_+^M\to\R^M$. Since $\sigma(\uab{M}{\varepsilon})$ is a neighborhood of identity of $\R_+^M$, it follows that $$U\colon=\varphi(\sigma(\uab{M}{\varepsilon}))$$ is a neighborhood of zero in $\R^M$. For each $j\in F$, put $a_j=\varphi(\sigma(c_j))$, and use Lemma \ref{lemma:bourbaki} to choose $n\in F$ and an integer $z_j$ for each $j\in F$, such that \eqref{eq:small:integer:combination} holds. Observe that \eqref{eq:1:universal} is satisfied.
\end{proof}

Before stating and proving the two key lemmas of this section, let us recall the definition of topological independence in the multiplicative notation and its immediate consequence: a subset $A$ of a topological group $G$ is {\em topologically independent}, provided that for every neighborhood $W$ of the neutral element $e$, there exists neighborhood $U$ of $e$ such that for every finite subset $F\subset A$ and every indexed set $\{z_a\colon a\in F\}$ of integers the inclusion $\prod_{a\in F}a^{z_a}\in U$ implies that $a^{z_a}\in W$ for all $a\in F$.
From the definition of topological independence, we obtain the following lemma.
 \begin{lemma}\label{lemma:A:top:ind:B:too}
     Let $A=\{a_n\colon n\in\N\}$ be a faithfully indexed topologically independent subset of a topological group $G$ and $(z_n)_{n\in\N}$ a sequence of nonzero integers. The set $B\colon =\{b_n\colon  b_n=a_n^{z_n}, n\in\N\}$ is a faithfully indexed topologically independent subset of $G$.
 \end{lemma}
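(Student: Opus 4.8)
The plan is to let a single neighborhood witness the topological independence of both $A$ and $B$, the point being the elementary substitution $b_n^{t}=(a_n^{z_n})^{t}=a_n^{z_nt}$, which turns any integer combination of the $b_n$ into an integer combination of the $a_n$ with rescaled exponents. Concretely, fix a neighborhood $W$ of $e$ and let $U$ be a $W$-witness of the topological independence of $A$; I claim the same $U$ witnesses it for $B$. Given a finite $F'\subseteq B$, written as $\{b_n\colon n\in N\}$ for finite $N\subseteq\N$, together with integers $\{t_n\colon n\in N\}$ satisfying $\prod_{n\in N}b_n^{t_n}\in U$, I rewrite $\prod_{n\in N}b_n^{t_n}=\prod_{n\in N}a_n^{z_nt_n}$. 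Since $\{a_n\colon n\in N\}$ is a finite subset of $A$, the witness property of $U$ (applied with the exponents $z_nt_n$) gives $a_n^{z_nt_n}\in W$, that is $b_n^{t_n}\in W$, for every $n\in N$. So this part of the statement is essentially automatic.

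It remains to check that $B$ is a faithfully indexed set of nonzero elements. Here I would first record that, $G$ being Hausdorff, topological independence implies ordinary algebraic independence: if $\prod_{a\in F}a^{w_a}=e$ then, as $e$ belongs to every $W$-witness, each $a^{w_a}$ lies in every neighborhood of $e$ and hence equals $e$. Granting that each $b_n\neq e$, faithful indexing is then immediate: were $b_n=b_m$ with $n\neq m$, the relation $a_n^{z_n}a_m^{-z_m}=e$ among the distinct elements $a_n,a_m$ of $A$ would force $b_n=a_n^{z_n}=e$ by independence, a contradiction.

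The genuinely delicate point, and the step I expect to be the main obstacle, is verifying $b_n=a_n^{z_n}\neq e$ for every $n$. This is automatic whenever each $a_n$ has infinite order (in particular for torsion-free $G$), since then $a_n\neq e$ and $z_n\neq0$ give $a_n^{z_n}\neq e$. In full generality some control on the orders of the $a_n$ seems unavoidable: an element of finite order dividing $z_n$ would collapse $b_n$ to $e$, violating both the faithful indexing and the requirement that a topologically independent set consist of nonzero elements. I would therefore either restrict to the torsion-free situation relevant in the applications or carry the hypothesis $a_n^{z_n}\neq e$ explicitly; with that in place, the two steps above finish the argument.
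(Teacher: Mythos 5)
Your witness-transfer argument is exactly the proof the paper has in mind: the paper offers no proof of this lemma at all (it is introduced only with the words ``From the definition of topological independence, we obtain the following lemma''), and the intended one-line justification is precisely your substitution $\prod_{n\in N}b_n^{t_n}=\prod_{n\in N}a_n^{z_nt_n}$, showing that any $W$-witness $U$ for $A$ is already a $W$-witness for $B$. Your derivation of faithful indexing from algebraic independence (valid because all topological groups in the paper are assumed Hausdorff) is also correct.

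The concern in your last paragraph is not a gap in your proof but a genuine flaw in the lemma as stated: without a hypothesis forcing $a_n^{z_n}\neq e_G$, the statement is false. For a counterexample inside the very groups the paper studies, work in $\S^{\N}\subset(\C^\times)^{\N}$ and let $a_n$ be defined by $a_n(n)=-1$ and $a_n(j)=1$ for $j\neq n$. The set $A=\{a_n\colon n\in\N\}$ is faithfully indexed and topologically independent: given a neighborhood $W$ of $e_G$, any basic neighborhood $\u{M}{\varepsilon}\subseteq W$ is a $W$-witness, since the $a_n$ have pairwise disjoint supports, so for finite $F\subset\N$ and integers $z_n$ the $n$-th coordinate of $\prod_{n\in F}a_n^{z_n}$ equals $a_n(n)^{z_n}$, whence $\prod_{n\in F}a_n^{z_n}\in\u{M}{\varepsilon}$ forces each $a_n^{z_n}\in\u{M}{\varepsilon}$. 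Yet taking $z_n=2$ for all $n$ gives $b_n=a_n^{2}=e_G$ for every $n$, so $B=\{e_G\}$ is neither faithfully indexed nor topologically independent (topologically independent sets consist of non-identity elements by definition). Note also that your first suggested repair, restricting to torsion-free groups, would not serve the paper, which applies the lemma inside $(\C^\times)^I$, a group with torsion; the right repair is your second one, carrying the hypothesis $a_n^{z_n}\neq e_G$ for all $n$ explicitly, which by your own argument then yields the faithful indexing of $B$ for free. This emendation costs the paper nothing: in its sole application of the lemma, the proof of Lemma \ref{lemma:previous}, condition \eqref{eq:mimo:W} guarantees $b_n=a_n^{z_n}\notin\u{\{i\}}{1/2}$, hence $b_n\neq e_G$, and the faithful indexing of $B$ is verified there separately in any case.
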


Now we are ready to state and prove the two key lemmas of this section.

\begin{lemma}\label{lemma:previous}
   Let $I$ be a set of indices. If $A$ is a topologically independent subset of $(\C^\times)^I$, then the set $\{a\in A\colon |a(i)|\neq 1\}$ is finite for every $i\in I$.
\end{lemma}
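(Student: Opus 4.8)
The plan is to argue by contradiction. Suppose that for some $i_0\in I$ the set $S=\{a\in A\colon |a(i_0)|\neq 1\}$ is infinite, and fix a faithfully indexed sequence $(a_n)_{n\in\N}$ in $S$; since any subset of a topologically independent set is again topologically independent, $\{a_n\colon n\in\N\}$ is topologically independent. I first normalize at the coordinate $i_0$: for each $n$ choose a positive integer $z_n$ with $|a_n(i_0)|^{z_n}\geq 2$ when $|a_n(i_0)|>1$, and $|a_n(i_0)|^{z_n}\leq \tfrac12$ when $|a_n(i_0)|<1$, and put $b_n=a_n^{z_n}$. By Lemma \ref{lemma:A:top:ind:B:too} the set $B=\{b_n\colon n\in\N\}$ is faithfully indexed and topologically independent, and a direct estimate gives $\bigl||b_n(i_0)|^{k}-1\bigr|\geq \tfrac12$ for every $n\in\N$ and every nonzero integer $k$. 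Hence, setting $W:=\uab{\{i_0\}}{1/2}$, we have $b_n^{k}\notin W$ for all $n\in\N$ and all $k\neq 0$. This uniform bound is precisely what I will need at the end.

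Applying topological independence of $B$ to this $W$ produces a $W$-witness $U$; since any neighborhood contained in a witness is again a witness, I may assume $U=\mathcal{U}(M,\varepsilon)$ for some finite $M\subseteq I$ and some $\varepsilon>0$. The crux is the following claim: there exist a finite set $F\subseteq\N$ and integers $(w_j)_{j\in F}$, not all zero, such that $\prod_{j\in F}b_j^{w_j}\in\mathcal{U}(M,\varepsilon)$. Granting it, choose $n\in F$ with $w_n\neq 0$. Then $\prod_{j\in F}b_j^{w_j}\in\mathcal{U}(M,\varepsilon)=U$, so the witness property forces $b_j^{w_j}\in W$ for all $j\in F$, in particular $b_n^{w_n}\in W$. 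This contradicts $b_n^{w_n}\notin W$, and the contradiction shows that $S$ is finite.

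To prove the claim I would combine the two halves of Lemma \ref{lemma:new:universal}. First fix $\varepsilon'>0$ small enough that $gg'^{-1}\in\uab{M}{\varepsilon}$ whenever $g,g'\in\uab{M}{\varepsilon'}$ (a routine estimate on absolute values). Partition $\N$ into infinitely many pairwise disjoint finite blocks $F_1,F_2,\dots$, each of cardinality greater than $|M|$. Applying the second part of Lemma \ref{lemma:new:universal} (with $\varepsilon'$) to the sequence $(b_j)$ and the block $F_t$ yields, for each $t$, an index $n_t\in F_t$ and integers $(z_j^{(t)})_{j\in F_t}$ with $g_t:=\prod_{j\in F_t}b_j^{z_j^{(t)}}\in\uab{M}{\varepsilon'}$ and $z_{n_t}^{(t)}\neq 0$. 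Now apply the first part of Lemma \ref{lemma:new:universal} to the sequence $(g_t)_{t\in\N}$, obtaining distinct $k,l$ with $g_kg_l^{-1}\in\uar{M}{\varepsilon}$. Since $g_k,g_l\in\uab{M}{\varepsilon'}$, the choice of $\varepsilon'$ gives $g_kg_l^{-1}\in\uab{M}{\varepsilon}$ as well, whence $g_kg_l^{-1}\in\mathcal{U}(M,\varepsilon)$. Because the blocks $F_k$ and $F_l$ are disjoint, $g_kg_l^{-1}=\prod_{j\in F_k\cup F_l}b_j^{w_j}$ with $w_{n_k}=z_{n_k}^{(k)}\neq 0$; thus this combination is nontrivial, and the claim follows.

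The main obstacle is producing a single integer combination that is simultaneously close to the neutral element in both the absolute-value coordinates and the argument coordinates over the finite set $M$: Lemma \ref{lemma:new:universal} controls these two features only separately. The device that overcomes this is the passage to the auxiliary sequence of block-products $(g_t)$ — each already absolute-value-small — so that the compactness-based first part of Lemma \ref{lemma:new:universal} can correct the arguments via the quotient $g_kg_l^{-1}$, while the disjointness of the blocks guarantees that this quotient is still a nontrivial word in the $b_j$. The other indispensable ingredient is the initial normalization through Lemma \ref{lemma:A:top:ind:B:too}: without the uniform lower bound $\bigl||b_n(i_0)|^{k}-1\bigr|\geq\tfrac12$, the nonzero exponent $w_n$ furnished by the claim need not be large enough to eject $b_n^{w_n}$ from $W$, and the final contradiction would break down.
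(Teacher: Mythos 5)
Your proposal is correct and follows essentially the same route as the paper's own proof: normalize powers via Lemma \ref{lemma:A:top:ind:B:too} so that all nonzero powers of the $b_n$ avoid a fixed neighborhood, then defeat an arbitrary basic candidate witness $\mathcal{U}(M,\varepsilon)$ by partitioning into blocks of size greater than $|M|$, making block products absolute-value-small with the second part of Lemma \ref{lemma:new:universal}, and correcting the arguments with its first part applied to those block products. The remaining differences are cosmetic: you argue by contradiction where the paper proves the contrapositive, and your $gg'^{-1}$ formulation of the absolute-value estimate is in fact slightly more careful than the paper's $\uab{M}{\delta}\cdot\uab{M}{\delta}$, which glosses over the fact that $\uab{M}{\delta}$ need not contain the inverse $c_l^{-1}$.
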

\begin{proof}
    We will prove the contrapositive. Assume that there is a faithfully indexed subset $\{a_n\colon n\in\N\}$ of $A$ and $i\in I$ such that
\begin{equation}\label{eq:pibn:nonzero}
   |a_n(i)|\neq  1 \mbox{ for every } n\in\N.
\end{equation}
Let us show that $A$ is not topologically independent.

By \eqref{eq:pibn:nonzero}, for every $n\in\N$,  there is a nonzero integer $z_n$ such that
% \begin{equation}\label{eq:mimo:W}
%   a_n^z\not\in \u{\{i\}}{\frac{1}{2}} \mbox{ for each } z\in\Z \mbox{ such that }|z|\ge |z_n|.
% \end{equation}
\begin{equation}\label{eq:mimo:W}
  a_n^z\not\in \u{\{i\}}{{1}/{2}} \mbox{ for each } z\in\Z \mbox{ such that }|z|\ge |z_n|.
\end{equation}
Put $B=\{b_n\colon b_n=a_n^{z_n},n\in\N\}$. If $b_k=b_l$ for some distinct natural numbers $k,l$, then $A$ is not topologically independent. Otherwise, $B$ is a faithfully indexed set, and, by Lemma \ref{lemma:A:top:ind:B:too}, it remains to show that $B$ is not topologically independent. Observe that from \eqref{eq:mimo:W} we get
% \begin{equation}\label{eq:b:to:z:not:in:U}
% b_n^z\not\in \u{\{i\}}{\frac{1}{2}} \mbox{ for each } z\in\Z\setminus\{0\} \mbox{ and } n\in\N.    
% \end{equation}
\begin{equation}\label{eq:b:to:z:not:in:U}
b_n^z\not\in \u{\{i\}}{{1}/{2}} \mbox{ for each } z\in\Z\setminus\{0\} \mbox{ and } n\in\N.    
\end{equation}
 % We claim that there is no $\u{\{i\}}{\frac{1}{2}}$-witness of the topological independence of $B$. 
 We claim that there is no $\u{\{i\}}{{1}/{2}}$-witness of the topological independence of $B$. 
 
 To show this, pick finite $M\subset I$ and positive $\varepsilon\in\R$ arbitrarily. It suffices to show that the basic neighborhood $\u{M}{\varepsilon}$ of the neutral element is not a 
 % $\u{\{i\}}{\frac{1}{2}}$-witness of the topological independence of $B$. 
$\u{\{i\}}{{1}/{2}}$-witness of the topological independence of $B$.

Let $\{F_n\colon n\in\N\}$ be a faithfully indexed partition of $\N$ such that each $F_n$ is finite and has a cardinality greater than $|M|$.

Chose positive $\delta\in\R$ such that 

\begin{equation}\label{eq:delta:for:epsilon}
    \uab{M}{\delta}\cdot\uab{M}{\delta}\subset\uab{M}{\varepsilon},
\end{equation}

and use the second part of Lemma \ref{lemma:new:universal} to find for every $k\in\N$ some $n_k\in F_k$ and, for  every $j\in F_k$, integers $z_j$ such that

\begin{equation}\label{eq:c:n}
c_k\colon =\prod_{j\in F_k}b_j^{z_j}\in\uab{M}{\delta}, \mbox{ and } z_{n_k}\neq 0.    
\end{equation}

From the first part of Lemma \ref{lemma:new:universal}, we get distinct $k,l\in\N$ such that \eqref{eq:arg:univ} holds.
Therefore, by \eqref{eq:c:n} and \eqref{eq:delta:for:epsilon} we get

$$c_kc_l^{-1}=\prod_{j\in F_k\cup F_l}b_j^{z_j}\in\left(\uab{M}{\delta}\cdot\uab{M}{\delta}\right)\cap\uar{M}{\varepsilon}\subset\u{M}{\varepsilon}.$$
 As $z_{n_k}\neq 0$ by \eqref{eq:c:n}, it follows from \eqref{eq:b:to:z:not:in:U} that 
 % $b_{n_k}^{z_{n_k}}\not\in\u{\{i\}}{\frac{1}{2}}.$ 
 $b_{n_k}^{z_{n_k}}\not\in\u{\{i\}}{{1}/{2}}.$ 
  This shows that $\u{M}{\varepsilon}$ is not a 
  % $\u{\{i\}}{\frac{1}{2}}$-witness 
  $\u{\{i\}}{{1}/{2}}$-witness 
  of the topological independence of $B$. Thus, $B$, and consequently $A$, is not topologically independent.
\end{proof}

\begin{lemma}\label{lemma:main}
Let $I$ be a set of indices. If $A$ is a topologically independent subset of $(\C^\times)^I$, then the set $\{a\in A\colon a(i)\neq 1\}$ is finite for every $i\in I$.
\end{lemma}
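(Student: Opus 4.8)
The plan is to prove the contrapositive and, using Lemma \ref{lemma:previous}, to reduce everything to the circle factor $\S$, where the mechanism of Lemma \ref{lemma:previous} no longer applies and must be replaced. So suppose $\{a\in A\colon a(i)\neq 1\}$ is infinite for some $i\in I$, and fix a faithfully indexed subset $\{a_n\colon n\in\N\}$ of it. Since $a(i)\neq 1$ means that either $|a(i)|\neq 1$ or $a(i)\in\S\setminus\{1\}$, and the former happens for only finitely many $a\in A$ by Lemma \ref{lemma:previous}, I may discard those finitely many indices and assume $|a_n(i)|=1$, that is $a_n(i)\in\S\setminus\{1\}$, for every $n$.

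Next, because each $a_n(i)$ is a nontrivial element of the circle, I claim there is a nonzero integer $z_n$ with $a_n(i)^{z_n}$ bounded away from $1$, say lying at distance at least $1/2$ from $1$ on $\S$ (if $a_n(i)$ has infinite order its powers are dense in $\S$, and if it is a root of unity of order $q\ge 2$ a suitable power lands far from $1$). Since $a_n^{z_n}$ then has absolute value $1$ but argument part far from $1$ at coordinate $i$, this gives $a_n^{z_n}\notin\u{\{i\}}{1/2}$. Put $b_n=a_n^{z_n}$. If $b_k=b_l$ for some distinct $k,l$, then $a_k^{z_k}a_l^{-z_l}=e$ lies in every neighbourhood of the neutral element while the factor $a_k^{z_k}\notin\u{\{i\}}{1/2}$, so $A$ is already not topologically independent; otherwise $B=\{b_n\colon n\in\N\}$ is faithfully indexed and, by Lemma \ref{lemma:A:top:ind:B:too}, it suffices to prove that $B$ is not topologically independent. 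I will show that $W=\u{\{i\}}{1/2}$ admits no $W$-witness, keeping in mind that $b_n\notin W$ for every $n$ by the choice of $z_n$.

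This is exactly where I depart from Lemma \ref{lemma:previous}, and it is the main obstacle: a nontrivial circle element has powers that return arbitrarily close to $1$, so I cannot guarantee $b_n^z\notin W$ for all $z\neq 0$, and the uncontrolled exponents supplied by the second part of Lemma \ref{lemma:new:universal} are therefore useless on the compact factor. The resolution is to use only the exponents $\pm 1$ and to lean on the compactness of $\S^M$. Given an arbitrary basic neighbourhood $U=\u{M}{\varepsilon}$, I first discard the finitely many $n$ (finite by Lemma \ref{lemma:previous} applied to each of the finitely many $j\in M$) for which $|a_n(j)|\neq 1$ for some $j\in M$; for the remaining, still infinite, set of indices one has $|b_n(j)|=1$ for all $j\in M$. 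Applying the first part of Lemma \ref{lemma:new:universal} to this subsequence produces distinct $k,l$ with $b_kb_l^{-1}\in\uar{M}{\varepsilon}$, and since $|b_k(j)|=|b_l(j)|=1$ for $j\in M$ one automatically has $b_kb_l^{-1}\in\uab{M}{\varepsilon}$, whence $b_kb_l^{-1}\in U$. As $b_k=b_k^{1}\notin W$ is one of the two factors of this product, $U$ fails to be a $W$-witness. Since $U$ was an arbitrary basic neighbourhood, $B$, and therefore $A$, is not topologically independent. In short, Lemma \ref{lemma:previous} is invoked twice—once to put coordinate $i$ onto the circle and once to clear the absolute-value coordinates in $M$—so that the ever-present closeness of arguments on the compact group $\S^M$ upgrades to genuine closeness in $(\C^\times)^M$ and yields the desired pair of indices.
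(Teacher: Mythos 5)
Your proof is correct and follows essentially the same route as the paper's: both reduce to unit-modulus coordinates via Lemma \ref{lemma:previous}, pick powers $z_n$ so that $a_n^{z_n}$ escapes a fixed neighbourhood of the form $\u{\{i\}}{\varepsilon_0}$, and then combine the first (compactness/pigeonhole) part of Lemma \ref{lemma:new:universal} with the unit-modulus property on the coordinates in $M$ to place a quotient $a_k^{z_k}a_l^{-z_l}$ inside an arbitrary basic neighbourhood $\u{M}{\varepsilon}$, defeating every candidate witness. The only cosmetic differences are that the paper works directly with $A$ (no detour through $B$ and Lemma \ref{lemma:A:top:ind:B:too}), uses the constant $1$ instead of $1/2$, and invokes Lemma \ref{lemma:previous} once globally (assumption \eqref{eq:abs:value:1}) rather than twice.
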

\begin{proof}
We will prove the contrapositive.  Assume that there is a faithfully indexed subset $\{a_n\colon n\in\N\}$ of $A$ and $i\in I$ such that\begin{equation}\label{eq:pibn:nonone}
   a_n(i)\neq 1 \mbox{ for every } n\in\N.
\end{equation}
Let us show that $A$ is not topologically independent.

By Lemma \ref{lemma:previous}, it suffices to assume that 
\begin{equation}\label{eq:abs:value:1}
  \mbox{the set }  \{n\colon |a_n(j)|\neq 1\} \mbox{ is finite for every } j\in I.
\end{equation}

It remains to show that there is no $\u{\{i\}}{1}$-witness of the topological independence of $A$. 
To show this, pick finite $M\subset I$ and positive $\varepsilon\in\R$ arbitrarily. We will prove that the basic neighborhood $\u{M}{\varepsilon}$ of the neutral element is not a $\u{\{i\}}{1}$-witness of the topological independence of $A$. 

Use \eqref{eq:abs:value:1} to find infinite 
 $L\subseteq\N$ such that 
\begin{equation} \label{eq:0real:part}
|a_n(j)|=1 \mbox{ for all } n\in L \mbox{ and } j\in M.    
\end{equation}

By \eqref{eq:pibn:nonone}, for every $n\in L$, there is an integer $z_n$ such that 
\begin{equation}\label{eq:znan:not:in:W}
    {a_n}^{z_n}\not\in\u{\{i\}}{1} \mbox{ for all } n\in L.
\end{equation}

For every $n\in L$ put $c_n={a_n}^{z_n}$. By Lemma \ref{lemma:new:universal}, there are distinct $k,l\in L$, such that \eqref{eq:arg:univ} holds. This together with \eqref{eq:0real:part} implies that $$c_kc_l^{-1}=a_k^{z_k}a_l^{-z_l}\in\uar{M}{\varepsilon}\cap\uab{M}{\varepsilon}=\u{M}{\varepsilon}.$$ 
Since $a_k^{z_k}\not\in\u{\{i\}}{1}$ by \eqref{eq:znan:not:in:W}, it follows that $\u{M}{\varepsilon}$ is not a $\u{\{i\}}{1}$-witness of the topological independence of $A$. This finishes the proof.
\end{proof}

\begin{theorem}\label{thm:mainone}
    Let $I$ be an arbitrary set. A subset $A$ of $(\C^\times)^I$ is topologically independent if and only if $\hull{A}$ is the Tychonoff direct sum $\bigoplus_{a\in A}\hull{a}$.  
\end{theorem}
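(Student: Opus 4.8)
The plan is to derive Theorem \ref{thm:mainone} by combining Theorem \ref{fact:from:DSS} with Lemma \ref{lemma:main}, reducing everything to the notion of absolute Cauchy summability. The theorem is an equivalence, so I will prove each implication separately, and the content lies almost entirely in one direction.

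\medskip

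\noindent\textbf{The easy direction.} Suppose $\hull{A}$ is the Tychonoff direct sum $\bigoplus_{a\in A}\hull{a}$. By Theorem \ref{fact:from:DSS}, any set $A$ for which $\hull{A}$ is this Tychonoff direct sum is simultaneously topologically independent and absolutely Cauchy summable. In particular $A$ is topologically independent, which is exactly what is required. (Alternatively one can invoke \cite[Proposition 4.7(i)]{DSS}, but Theorem \ref{fact:from:DSS} already packages the statement.)

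\medskip

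\noindent\textbf{The main direction.} Now assume $A$ is topologically independent; I must show $\hull{A}$ is the Tychonoff direct sum $\bigoplus_{a\in A}\hull{a}$. By Theorem \ref{fact:from:DSS}, since $A$ is already topologically independent, it suffices to show that $A$ is absolutely Cauchy summable. The strategy is therefore to verify absolute Cauchy summability from the coordinatewise finiteness supplied by Lemma \ref{lemma:main}. First I would fix a neighborhood $V$ of the neutral element of $(\C^\times)^I$ and, shrinking it, assume $V$ contains a basic neighborhood $\u{M}{\varepsilon}$ for some finite $M\subset I$ and $\varepsilon>0$. By Lemma \ref{lemma:main}, for each coordinate $j\in M$ the set $\{a\in A\colon a(j)\neq 1\}$ is finite; taking the union over the finitely many $j\in M$ produces a single finite set $F\subseteq A$ such that $a(j)=1$ for every $a\in A\setminus F$ and every $j\in M$. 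Then every element of the subgroup $\hull{A\setminus F}$ is equal to $1$ on each coordinate $j\in M$, so $\hull{A\setminus F}\subset\u{M}{\varepsilon}\subset V$. This establishes the defining inclusion \eqref{eq5}, proving that $A$ is absolutely Cauchy summable, and Theorem \ref{fact:from:DSS} then yields that $\hull{A}$ is the Tychonoff direct sum.

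\medskip

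\noindent\textbf{Where the difficulty sits.} The genuinely hard part of the argument has already been isolated into Lemma \ref{lemma:main} (itself resting on Lemma \ref{lemma:previous} and the Bourbaki-type input of Lemma \ref{lemma:bourbaki}); once that coordinatewise finiteness is in hand, the deduction of absolute Cauchy summability is a short extraction of a single finite witness set $F$, and the rest is purely formal via Theorem \ref{fact:from:DSS}. The only point requiring mild care is the passage from a general neighborhood of the identity to a basic one of the form $\u{M}{\varepsilon}$, and the observation that controlling the finitely many coordinates in $M$ simultaneously only costs a finite union of finite sets, hence stays finite. No further obstacle is expected.
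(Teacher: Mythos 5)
Your proposal is correct and follows essentially the same route as the paper: both directions are handled by Theorem \ref{fact:from:DSS}, and the forward direction reduces to absolute Cauchy summability, extracted from Lemma \ref{lemma:main} by taking the finite set $F=\{a\in A\colon a(i)\neq 1 \mbox{ for some } i\in M\}$ (your union over $j\in M$ is exactly this set) and observing $\hull{A\setminus F}\subset\u{M}{\varepsilon}\subset V$. The only cosmetic difference is that you spell out the easy direction explicitly, whereas the paper absorbs it into the single appeal to Theorem \ref{fact:from:DSS}.
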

\begin{proof}
    By Fact \ref{fact:from:DSS}, it suffices to prove that if $A$ is topologically independent, then it is absolutely Cauchy summable. To do so, pick a neighborhood $V$ of the neutral element of $(\C^\times)^I$. There are finite $M\subset I$ and positive $\varepsilon\in\R$ such that $$\u{M}{\varepsilon}\subset V.$$ By Lemma \ref{lemma:main}, the set $F=\{a\in A\colon a(i)\neq 1 \mbox{ for some } i\in M\}$ is finite. Observe that $\hull{A\setminus F}\subset \u{M}{\varepsilon}$. Therefore, \eqref{eq5} holds and $A$ is absolutely Cauchy summable. This finishes the proof. 
\end{proof}

\begin{corollary}\label{cor:comp:generated}
Let $G$ be a locally compact, compactly generated topological group. A subset $A$ of $G$ is topologically independent if and only if  $\hull{A}$ is the Tychonoff direct sum $\bigoplus_{a\in A}\hull{a}$.    
\end{corollary}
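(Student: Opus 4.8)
The plan is to deduce Corollary \ref{cor:comp:generated} from Theorem \ref{thm:mainone} by realizing any locally compact, compactly generated group as a topological subgroup of some power $(\C^\times)^I$. The key structural input is the classical theorem that a locally compact, compactly generated abelian group $G$ is topologically isomorphic to $\R^n \times \Z^m \times K$, where $n,m$ are nonnegative integers and $K$ is a compact group. First I would invoke this decomposition, so that the entire problem reduces to embedding each of the three factors into a power of $\C^\times$ in a way that is a topological isomorphism onto its image (an isomorphic embedding, i.e.\ simultaneously continuous, open onto its image, and injective).

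Next I would exhibit the embeddings factor by factor. The additive group $\R$ embeds into $\C^\times$ as the subgroup $\R_+$ of positive reals, which is isomorphic to $\R$ (the logarithm is the isomorphism $\R_+\to\R$ mentioned in the proof of Lemma \ref{lemma:new:universal}); hence $\R^n$ embeds as $\R_+^n\subseteq(\C^\times)^n$. The discrete group $\Z$ embeds into $\C^\times$ as a discrete cyclic subgroup, for instance $\{2^k\colon k\in\Z\}\subseteq\R_+$, so $\Z^m$ embeds as a topological subgroup of $(\C^\times)^m$. Finally, the compact factor $K$, being a compact (hence precompact) abelian group, embeds as a topological subgroup of $\S^J$ for some index set $J$ by the standard Peter--Weyl / Pontryagin duality fact that the continuous characters separate points; since $\S\subseteq\C^\times$, this gives a topological embedding $K\hookrightarrow(\C^\times)^J$. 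Assembling these, $G\cong\R^n\times\Z^m\times K$ embeds as a topological subgroup of $(\C^\times)^{n}\times(\C^\times)^{m}\times(\C^\times)^{J}=(\C^\times)^I$, where $I$ is the disjoint union of the corresponding index sets.

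With the embedding $\iota\colon G\to(\C^\times)^I$ in hand, I would transport the problem along $\iota$. Since $\iota$ is an isomorphic embedding (a topological isomorphism onto its image $\iota(G)$), topological independence is preserved and reflected: a subset $A\subseteq G$ is topologically independent in $G$ if and only if $\iota(A)$ is topologically independent in $(\C^\times)^I$, because $\iota$ carries a neighborhood base at $e_G$ bijectively to a neighborhood base at $e_{(\C^\times)^I}$ relative to $\iota(G)$ and respects the group operations. Likewise, $\hull{A}$ is the Tychonoff direct sum $\bigoplus_{a\in A}\hull{a}$ in $G$ if and only if the corresponding statement holds for $\iota(A)$ in $(\C^\times)^I$, since $\iota$ restricts to a topological isomorphism $\hull{A}\to\hull{\iota(A)}$ and commutes with the Kalton map. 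Applying Theorem \ref{thm:mainone} to $\iota(A)$ then yields the equivalence for $A$, completing the proof.

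The main obstacle I anticipate is purely expository rather than mathematical: carefully checking that $\iota$ is genuinely an isomorphic embedding (not merely a continuous injection) and that both properties in the statement are literally invariant under such embeddings. The embedding of the compact factor requires the separation-of-points-by-characters result, whose image topology must be verified to coincide with the original compact topology; here compactness is what guarantees that the continuous bijection onto its image is a homeomorphism, so I would make sure to record that compact groups are precompact and quote the $\S^J$-embedding explicitly. The structure theorem for locally compact compactly generated abelian groups is the one external ingredient I would cite rather than prove.
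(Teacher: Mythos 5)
Your proposal is correct and follows essentially the same route as the paper: both invoke the Hewitt--Ross structure theorem $G\cong\R^n\times\Z^m\times K$, embed each factor into a power of $\C^\times$ (the real factor as $\R_+$, the compact factor into $\S^J$ via characters), and then apply Theorem \ref{thm:mainone}. The only cosmetic differences are that the paper first absorbs $\Z^m$ into $\R^{n+m}$ as a discrete subgroup and then passes to $\R_+^{n+m}$, whereas you embed $\Z$ directly as $\{2^k\colon k\in\Z\}\subseteq\R_+$, and that your explicit verification that topological independence and the Tychonoff direct sum property transfer along isomorphic embeddings is left implicit in the paper.
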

\begin{proof}
    By \cite[Theorem 9.8]{HR}, the topological group $G$ is isomorphic to $\R^k\times\Z^l\times F$ for appropriate nonnegative integers $k,l$ and compact group $F$. Hence $G$ is isomorphic to a subgroup of the topological group $\R^z\times F$ for some nonnegative integer $z$. In addition, the additive topological group $\R^z$ is isomorphic to the multiplicative topological group $\R_+^z$ and, by \cite[Corollary 11.5.2]{ADG}, the compact group $F$ is isomorphic to a topological subgroup of $\S^J$ for a suitable set $J$. Consequently, there is a set $I$ such that $G$ is isomorphic to a topological subgroup of $\R_+^I\times\S^I$. Since the latter topological group is isomorphic to $(\C^\times)^I$, the conclusion follows from Theorem \ref{thm:mainone}. 
\end{proof}

We end this section with Theorem \ref{thm:positive:problem:solution:for:weak:top}, which completely resolves the relation of topologically independent sets and Tychonoff direct sums in the realm of the underlying topological groups of topological vector spaces. 
Before stating it, recall that a topology of a topological vector space $X$ is {\em weak} provided that it is generated by the family $X^*$ of all continuous linear functionals on $X$.
\begin{theorem}\label{thm:positive:problem:solution:for:weak:top}
    Let $A$ be a topologically independent subset of a topological vector space $E$. Then $\hull{A}$ is the Tychonoff direct sum $\bigoplus_{a\in A}\hull{a}$ if and only if the topology of $\rhull{A}$ is weak.
\end{theorem}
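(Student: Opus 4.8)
The plan is to prove the two implications separately, using Theorem \ref{thm:mainone} for the direction in which the weak topology is given, and a direct neighbourhood analysis for the converse. Throughout I would exploit that topological independence of $A$ depends only on the subspace topology of $\hull{A}$, hence only on the topology of $\rhull{A}$; this lets me transport $A$ along topological embeddings freely.

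For the implication ``weak $\Rightarrow$ direct sum'', suppose the topology of $\rhull{A}$ is weak. Then the continuous linear functionals separate the points of $\rhull{A}$ (the space is Hausdorff and its topology is generated by them), so with $J=(\rhull{A})^{*}$ the evaluation map $e\colon\rhull{A}\to\R^{J}$, $e(x)=(f(x))_{f\in J}$, is a topological embedding onto its image, where $\R^{J}$ carries the product topology. Composing with the coordinatewise isomorphism $\R\cong\R_{+}$ of the additive and multiplicative groups, I obtain a topological group embedding $\rhull{A}\hookrightarrow\R_{+}^{J}\subseteq(\C^\times)^{J}$. Since topological independence is intrinsic to the subspace topology, the image of $A$ is a topologically independent subset of $(\C^\times)^{J}$, and Theorem \ref{thm:mainone} gives that $\hull{A}=\bigoplus_{a\in A}\hull{a}$ there; transporting back along the embedding yields the same in $E$.

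For the converse I would assume $\hull{A}=\bigoplus_{a\in A}\hull{a}$, so by Theorem \ref{fact:from:DSS} the set $A$ is absolutely Cauchy summable. First I record two facts. Each $\hull{a}$ is a copy of $\Z$ sitting discretely in the line $\rhull{a}\cong\R$, so finite Tychonoff direct sums $\hull{F}$ are discrete; were there a nontrivial real relation among finitely many elements, the free abelian group $\hull{F}$ of rank $|F|$ would embed as a rank-$|F|$ subgroup of $\rhull{F}\cong\R^{d}$ with $d<|F|$, which is nondiscrete by Lemma \ref{lemma:bourbaki} -- a contradiction. Hence $A$ is $\R$-linearly independent and the coordinate functionals $f_{a}\colon\rhull{A}\to\R$, $f_{a}(\sum_{c}r_{c}c)=r_{a}$, are well defined. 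Next, given any neighbourhood $W_0$ of $0$, I choose a balanced neighbourhood $W$ with $\overline{W}\subseteq W_0$ and, by absolute Cauchy summability, a finite $F$ with $\hull{A\setminus F}\subseteq W$. For a fixed $x=\sum_{c\in G}r_{c}c$ with $G\subseteq A\setminus F$ finite, the integer points $g_{n}=\sum_{c\in G}\mathrm{round}(nr_{c})\,c\in\hull{A\setminus F}$ satisfy $\tfrac1n g_{n}\in\tfrac1n W\subseteq W$ and $\tfrac1n g_{n}\to x$, whence $x\in\overline{W}\subseteq W_0$. Thus every neighbourhood of $0$ contains a finite-codimensional subspace $\rhull{A\setminus F}$.

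The main obstacle is to show that each $f_{a}$ is continuous, i.e. to pass from the integer coordinate $\pi_{a}=f_{a}|_{\hull{A}}\colon\hull{A}\to\Z$, which is continuous because $\hull{A}$ is the Tychonoff direct sum, to the real coordinate on $\rhull{A}$. Continuity of $\pi_{a}$ into the discrete group $\Z$ makes $\ker\pi_{a}=\hull{A\setminus\{a\}}$ relatively open, so there is a balanced neighbourhood $W_{1}$ with $\hull{A}\cap W_{1}\subseteq\hull{A\setminus\{a\}}$. If $f_{a}$ were unbounded on every neighbourhood, normalising would produce a net $y_{\beta}\to 0$ in $\rhull{A}$ with $f_{a}(y_{\beta})=1$; scaling each $y_{\beta}$ by a suitable integer $N_{\beta}\to\infty$ with $N_{\beta}y_{\beta}\to 0$ and rounding into $\hull{A}$ would give elements of $\hull{A}\cap W_{1}$ whose $a$-coordinate equals $N_{\beta}\neq 0$, contradicting the choice of $W_{1}$; hence $f_{a}$ is bounded on a neighbourhood and therefore continuous. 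I expect this scaling step to be the delicate point: for a general, non-metrizable $E$ one must arrange that the integer multiples still tend to $0$, which is where the real work lies. Finally I would assemble: letting $\sigma$ be the topology generated by $\{f_{a}:a\in A\}$, continuity of the $f_{a}$ gives $\sigma\subseteq\sigma(\rhull{A},(\rhull{A})^{*})\subseteq\tau$ for the original topology $\tau$; conversely, given $W_0$ I pick $W_1+W_1\subseteq W_0$, then $F$ with $\rhull{A\setminus F}\subseteq W_1$ and $\delta>0$ so small that $\sum_{a\in F}s_{a}a\in W_{1}$ whenever $|s_{a}|<\delta$, so that $\bigcap_{a\in F}\{|f_{a}|<\delta\}\subseteq W_1+W_1\subseteq W_0$ and $\tau\subseteq\sigma$. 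Therefore $\tau=\sigma(\rhull{A},(\rhull{A})^{*})$ is the weak topology.
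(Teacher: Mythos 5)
Your forward implication (weak topology $\Rightarrow$ direct sum) is exactly the paper's argument: embed $X=\rhull{A}$ into $\R^{X^*}$ by the evaluation (diagonal) map, identify $\R$ with the multiplicative group $\R_+\subseteq\C^\times$, note that topological independence only depends on the subspace topology of $\hull{A}$, and apply Theorem \ref{thm:mainone}. That half is correct and is the same route as the paper.

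The converse direction, however, has a genuine gap at its central step. The paper does not argue by hand here: it observes that $\hull{A}$, being the Tychonoff direct sum $\bigoplus_{a\in A}\hull{a}$ with each $\hull{a}$ discrete infinite cyclic, is isomorphic to $\Z^{(A)}$, and then invokes \cite[Corollary 3.5]{PS}, which asserts that $\Z^{(A)}$ has an \emph{invariant linear span}; this yields $\rhull{A}\cong\R^{(A)}$ at one stroke. You are in effect trying to reprove that invariance result directly, and the proof of continuity of the coordinate functionals $f_a$ --- which you correctly identify as the crux --- does not go through as written. Two steps fail. First, from a net $y_\beta\to 0$ in a general topological vector space one cannot extract integers $N_\beta\to\infty$ with $N_\beta y_\beta\to 0$: that extraction is a metrizable (sequential) fact, and nothing in the hypotheses makes $E$ or $\rhull{A}$ metrizable; you concede this yourself (``where the real work lies''), so the step is simply absent. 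Second, even granting such $N_\beta$, the rounding step is unsound: writing $N_\beta y_\beta=\sum_{c\in G_\beta}t_c c$ and rounding each $t_c$ to the nearest integer introduces an error $\sum_{c\in G_\beta}\delta_c c$ with $|\delta_c|\le 1/2$, but the supports $G_\beta$ vary with $\beta$ and may be arbitrarily large, so this error need not lie in any prescribed neighborhood; hence the rounded points need not lie in $\hull{A}\cap W_1$, and no contradiction with the choice of $W_1$ is obtained. (Your earlier rounding argument, showing $\rhull{A\setminus F}\subseteq W_0$, is fine precisely because there the support is a single fixed finite set.) The surrounding bookkeeping --- real linear independence of $A$ via Lemma \ref{lemma:bourbaki} and discreteness of finite subsums, absolute Cauchy summability via Theorem \ref{fact:from:DSS}, and the final comparison of the topology with the one generated by the $f_a$ --- is correct, but without a proof that each $f_a$ is continuous the converse remains open; the missing content is exactly the nontrivial Perneck\'a--Sp\v{e}v\'ak invariance theorem that the paper cites.
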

\begin{proof}
    Put $X=\rhull{A}$. If the topological vector space $X$ has a weak topology, then the diagonal $$\Delta_{f\in X^*}f\colon X\to\R^{X^*}$$ is an isomorphic embedding of topological vector spaces. Since the topological groups $\R$ and $\R_+$ are isomorphic, $A$ can be considered to be a subset of the topological group $\R_+^{X^*}$ which in turn is (isomorphic to) a topological subgroup of $(C^\times)^{X^*}$. Hence, $\hull{A}$ is the Tychonoff direct sum $\bigoplus_{a\in A}\hull{a}$ by Theorem \ref{thm:mainone}. 

    If $\hull{A}$ is the Tychonoff direct sum $\bigoplus_{a\in A}\hull{a}$, then it is isomorphic to the Tychonoff direct sum $\Z^{(A)}$, because each $\hull{a}$ is isomorphic to the discrete group $\Z$. The latter Tychonoff direct sum has an invariant linear span by \cite[Corollary 3.5]{PS}, which means that its linear span is unique (up to an isomorphism of topological vector spaces), whenever it embeds as a topological group into a topological vector space. Therefore, the topological vector space $\rhull{A}$ is isomorphic to $\R^{(A)}$. Hence, it has a weak topology.    
\end{proof}

\begin{example}
    Every Schauder basis $A$ of an infinite-dimensional Banach space is topologically independent, but $\hull{A}$ is not the Tychonoff direct sum $\bigoplus_{a\in A}\hull{a}$.
\end{example}
\begin{proof}
    By \cite[Proposition 4.9]{DSS}, every Schauder basis in a Banach space is topologically independent. On the other hand, the topology of an infinite-dimensional normed space is never weak. The conclusion follows from Theorem \ref{thm:positive:problem:solution:for:weak:top}.
\end{proof}

\section{Topological (linear) independence and minimality in topological vector spaces} \label{sec:top:lin:indep}

We start this section by recalling a minimal set in a topological vector space.

\begin{definition}
 A subset $A$ of topological vector space $V$ is 
 {\em minimal} if, for all $a\in A$, we have
 \begin{equation}\label{eq:sembasic}
 a\not\in\overline{\langle A\setminus\{a\}\rangle_\R}.
 \end{equation}

\end{definition}

\begin{remark}
We have adopted the name {\em minimal} from \cite[Definition 6.1]{Singer}, where a minimal sequence was introduced in a Banach space. 
Minimal sets in Banach spaces are defined in \cite[Definition 1.1]{H}, where the connection of minimal sets and biorthogonal systems is explained via the Hahn-Banach theorem.
In \cite{Haz}, a minimal set is called {\em topologically free}. 
In the paper \cite{PS}, which we use substantially in this section, minimal sets are called {\em semi-basic}. This name comes from \cite{kal2}, where the minimal sequences have this name.
\end{remark}

In this section, we establish the following chains of implications.

\begin{center} \medskip\hspace{1em}\xymatrix{
\mbox{top. lin.  indep.}\ar@{=>}[r]^{\hspace{-.3cm}(a)}\ar@{=>}[d]^{(b)}&\mbox{top. independent}\ar@{=>}[d]^{(c)}\\
\mbox{minimal}  \ar@{=>}[r]^{\hspace{-.7cm}(d)}&\mbox{lin. independent} }
  \end{center}

In general, no other implication can be added to the above diagram as demonstrated in Examples \ref{ex:top:ind:not:top:lin:ind} and \ref{ex:minimal:not:top:ind} due to Eva Perneck\'{a}. On the other hand, implication (a) is reversible in the realm of topological vector spaces with weak topology (see Theorem \ref{thm:top:ind:top:lin:ind}). Implications (a) and (c) are established in Lemma \ref{lemma:finite:all:the:same}. The proof of implication (b) is a straightforward exercise. However, we establish the implication (b) in Corollary \ref{prop:lin:indep:is:semibasic}, which is based on Proposition \ref{prop:lemma:3.2} and Proposition \ref{prop:top:lin:ind:=equicontinuous}. These propositions provide reformulations of minimality and topological linear independence and provide a better insight into their close relationship. The implication (d) is obvious.

\begin{lemma}\label{lemma:finite:all:the:same}
Let $A$ be a subset of a Hausdorff topological vector space. Consider the following statements:
\begin{itemize}
    \item[(i)]
    $A$ is topologically linearly independent.
    \item[(ii)]
    $A$ is topologically independent.
     \item[(iii)]
    $A$ is linearly independent.
\end{itemize}
Then the following chain of implications holds: (i)$\Rightarrow$(ii)$\Rightarrow$(iii). Moreover, if $A$ is finite, then all three conditions are equivalent. 
\end{lemma}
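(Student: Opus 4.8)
The plan is to establish the two implications (i)$\Rightarrow$(ii)$\Rightarrow$(iii) and then separately treat the finite case.

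\medskip

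For the implication (i)$\Rightarrow$(ii), I would argue directly from the definitions. Suppose $A$ is topologically linearly independent and let $W$ be a neighborhood of $0$. Applying Definition \ref{def:top:lin:ind} produces a $W$-witness $U$ that controls arbitrary \emph{real} linear combinations $\sum_{a\in F}r_aa$. The key observation is that the integer combinations appearing in the definition of topological independence are a special case of these real combinations: every indexed set $\{z_a\colon a\in F\}$ of integers is in particular an indexed set $\{r_a\colon a\in F\}$ of reals. Hence the same neighborhood $U$ serves as a $W$-witness of topological independence, since $\sum_{a\in F}z_aa\in U$ immediately gives $z_aa\in W$ for all $a\in F$ by the real version. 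Thus $A$ is topologically independent.

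\medskip

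For (ii)$\Rightarrow$(iii), I would show the contrapositive, or equivalently derive linear independence directly. Assume $A$ is topologically independent. First note that each $a\in A$ is nonzero (this is built into the definition of topological independence). Now suppose $\sum_{a\in F}r_aa=0$ for some finite $F\subseteq A$ and reals $r_a$; I must show every $r_a=0$. The obstacle here is that topological independence only controls \emph{integer} combinations, whereas linear independence requires ruling out nontrivial \emph{real} relations. I expect this to be the main difficulty of the lemma. The route I would take is to observe that a nontrivial real linear relation forces, via standard rescaling, a nontrivial relation among the $a$'s that can be detected by the topological independence witnesses: if the $a\in F$ were linearly dependent, one could produce arbitrarily large integer multiples $z_aa$ that stay near $0$ (because the relation lets one approximate, using density of $\Z$-combinations in the span of a dependent family, combinations that are small yet have some $z_aa$ bounded away from $0$), contradicting the existence of a $W$-witness for a suitably small $W$. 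Concretely, a single nontrivial relation $\sum r_a a = 0$ with some $r_{a_0}\neq 0$ lets one find integers $z_a$ with $\sum z_a a$ arbitrarily close to $0$ while $z_{a_0}a_0$ is bounded away from $0$, which is exactly the failure of topological independence. I would cite the earlier framework (the implication (c) is noted to follow from \cite[Proposition 4.7]{DSS} or the analogous group-theoretic fact that topological independence implies independence, and independence over $\Z$ together with the vector space structure yields linear independence).

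\medskip

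For the finite case, the task is to reverse the implications, i.e.\ show (iii)$\Rightarrow$(i) when $A$ is finite. Here the plan is to exploit that on a finite-dimensional subspace all Hausdorff vector topologies coincide with the Euclidean one, so that linear independence of a finite set $A=\{a_1,\dots,a_n\}$ makes the map $(r_1,\dots,r_n)\mapsto\sum r_ia_i$ a linear isomorphism of $\R^n$ onto $\rhull{A}$ that is automatically a homeomorphism onto its image. Given a neighborhood $W$ of $0$, the preimages under the coordinate projections of suitable balls give a product neighborhood in $\R^n$; pulling back through this isomorphism yields a neighborhood $U$ of $0$ in $\rhull{A}$ such that $\sum r_ia_i\in U$ forces each coordinate $r_i$ to be small, hence $r_ia_i\in W$. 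This establishes topological linear independence and closes the circle. The finite-dimensionality is what makes the real and integer cases collapse together; the only mild care needed is in translating between the ambient topology on the whole space and the subspace topology on $\rhull{A}$, which is handled by continuity of the inclusion and of the coordinate functionals on the finite-dimensional subspace.
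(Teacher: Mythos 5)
Your proof has the same skeleton as the paper's: (i)$\Rightarrow$(ii) because integer coefficients are a special case of real coefficients (the paper calls this obvious); the finite case (iii)$\Rightarrow$(i) via uniqueness of the Hausdorff vector topology on a finite-dimensional space (the paper also calls this obvious, and your handling of the ambient-versus-subspace topology point is exactly the right care); and (ii)$\Rightarrow$(iii) as the only substantive step, which the paper disposes of by citing \cite[Proposition 4.11 (i)]{DSS} and which you instead attempt to prove directly. Your direct argument has the correct core idea, but as written it contains two false statements that need repair.

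First, the justification via ``density of $\Z$-combinations in the span of a dependent family'' is not true: for $a_1=(1,0)$, $a_2=(\sqrt{2},0)$, $a_3=(0,1)$ in $\R^2$, the group they generate has closure $\R\times\Z$, which is far from the span $\R^2$. What is true, and all your argument needs, is \emph{non-discreteness}: a subgroup of $\R^d$ of rank greater than $d$ is non-discrete --- precisely the Bourbaki fact the paper itself isolates as Lemma~\ref{lemma:bourbaki}. The repaired argument runs: if a finite $F\subseteq A$ carries a nontrivial real relation, work inside $\rhull{F}$, which is topologically $\R^d$ with $d<|F|$. If $F$ satisfies an integer relation $\sum_{a\in F}z_aa=0$ with some $z_a\neq0$, this sum lies in every $U$ while the nonzero element $z_aa$ is excluded from a suitable $W$ by Hausdorffness. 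Otherwise $\hull{F}$ is free of rank $|F|>d$, hence non-discrete, so every $U$ contains some $\sum_{a\in F}z_aa\neq0$ with not all $z_a=0$; then \emph{any} term with $z_a\neq0$ has norm at least $\min_{a\in F}\norm{a}>0$ and so avoids a fixed $W$. (Note that you only need \emph{some} term to escape $W$, not the term indexed by your distinguished $a_0$; guaranteeing $z_{a_0}\neq0$ is in fact harder to prove and is not required by the definition.) Second, and more seriously as a matter of understanding, your fallback claim that ``independence over $\Z$ together with the vector space structure yields linear independence'' is false: $\{1,\sqrt{2}\}\subset\R$ is independent over $\Z$ but linearly dependent over $\R$. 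This is exactly why (ii)$\Rightarrow$(iii) cannot be reduced to the algebraic implication and genuinely requires the topological non-discreteness argument (or the paper's citation); you should strike that clause rather than rely on it.
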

\begin{proof}
The implication (i)$\Rightarrow$(ii) is obvious. The implication (ii)$\Rightarrow$(iii) is \cite[Proposition 4.11 (i)]{DSS}. Finally, if $A$ is finite, the implication (iii)$\Rightarrow$(i) is obvious.
\end{proof}

Given an element $a$ of a linearly independent subset $A$ of a vector space, the symbol $\pi^A_a$ denotes the unique projection from $\rhull{A}$ onto $\rhull{a}$ such that $\ker\pi^A_a=\rhull{A\setminus\{a\}}$. 

The proof of the following proposition can be found in the proof of \cite[Lemma 3.2]{PS}.
\begin{proposition}\label{prop:lemma:3.2}
    Let $A$ be a linearly independent subset of a topological vector space $E$. The following conditions are equivalent:
\begin{itemize}
    \item [(i)]
    $A$ is minimal.
    \item[(ii)]
    The projection $\pi_a\colon \rhull{A}\to\rhull{a}$ is continuous for each $a\in A$.
\end{itemize}
\end{proposition}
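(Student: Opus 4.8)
The plan is to reduce the whole statement to the classical criterion that a linear functional on a topological vector space is continuous if and only if its kernel is closed. First I would observe that, since $A$ is linearly independent, every $x\in\rhull{A}$ has a unique representation $x=\sum_{b\in A}r_bb$ as a finite combination, so for each $a\in A$ the coordinate functional $\phi_a\colon\rhull{A}\to\R$, $\phi_a(x)=r_a$, is well defined and $\pi_a=\phi_a(\cdot)\,a$. Because $a\neq0$ the map $r\mapsto ra$ is a topological isomorphism of $\R$ onto the line $\rhull{a}$, hence $\pi_a$ is continuous exactly when $\phi_a$ is. Since $\phi_a(a)=1$, the functional $\phi_a$ is nonzero with $\ker\phi_a=\rhull{A\setminus\{a\}}$, so the classical criterion tells me that $\pi_a$ is continuous if and only if $\rhull{A\setminus\{a\}}$ is closed in the subspace $\rhull{A}$.

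With this reformulation, the remaining task is the purely topological equivalence: $a\notin\overline{\rhull{A\setminus\{a\}}}$ (closure in $E$, as in the definition of minimality) if and only if $\rhull{A\setminus\{a\}}$ is closed in $\rhull{A}$. I would use the standard identity $\overline{S}^{\,\rhull{A}}=\overline{S}^{\,E}\cap\rhull{A}$ with $S=\rhull{A\setminus\{a\}}$. For (i)$\Rightarrow$(ii), take $x\in\rhull{A}$ with $x\in\overline{\rhull{A\setminus\{a\}}}^{\,E}$ and write $x=r_aa+y$, $y\in\rhull{A\setminus\{a\}}$; as $\overline{\rhull{A\setminus\{a\}}}^{\,E}$ is a subspace containing $y$, it also contains $x-y=r_aa$, so $r_a\neq0$ would give $a\in\overline{\rhull{A\setminus\{a\}}}^{\,E}$, contradicting minimality. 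Thus $r_a=0$ and $x\in\rhull{A\setminus\{a\}}$, proving closedness. Conversely, for (ii)$\Rightarrow$(i), if $\rhull{A\setminus\{a\}}$ is closed in $\rhull{A}$ and $a\in\overline{\rhull{A\setminus\{a\}}}^{\,E}$, then $a\in\overline{\rhull{A\setminus\{a\}}}^{\,E}\cap\rhull{A}=\rhull{A\setminus\{a\}}$, contradicting the linear independence of $A$. Quantifying over all $a\in A$ yields the equivalence.

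The one genuine point to keep track of --- the main potential obstacle --- is the bookkeeping between the two closures: minimality is phrased with the closure taken in the ambient space $E$, whereas the kernel criterion naturally lives in the subspace $\rhull{A}$. The subspace-closure identity is what bridges them, and the decomposition $x=r_aa+y$ afforded by linear independence is precisely what forces a point of $\rhull{A}$ lying in the $E$-closure to have vanishing $a$-coordinate. Once this is in place, combining the functional-continuity reformulation of (ii) with the closure equivalence gives (i)$\Leftrightarrow$(ii).
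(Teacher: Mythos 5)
Your proof is correct. Note that for this proposition the paper itself contains no argument at all: it refers the reader to the proof of Lemma~3.2 in \cite{PS}, so there is no in-paper proof to compare against, and your write-up supplies a complete, self-contained argument. Your route is the natural one and every step checks out: the reduction of continuity of $\pi^A_a$ to continuity of the coordinate functional $\phi_a$ (via the topological isomorphism $r\mapsto ra$ of $\R$ onto $\rhull{a}$), the classical criterion that a nonzero linear functional on a topological vector space is continuous if and only if its kernel is closed, and the relative-closure identity $\overline{S}^{\,\rhull{A}}=\overline{S}^{\,E}\cap\rhull{A}$ that bridges the two closures. The point you flag as the main obstacle is indeed the only delicate one---minimality is phrased with the closure in the ambient space $E$, while the kernel criterion lives inside $\rhull{A}$---and your decomposition argument ($x=r_aa+y$ with $y\in\rhull{A\setminus\{a\}}$, so $r_aa$ lies in the closed subspace $\overline{\rhull{A\setminus\{a\}}}^{\,E}$, forcing $r_a=0$ by minimality) resolves it correctly; the converse direction via linear independence is likewise fine. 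Two facts are used silently but are legitimate: the Hausdorff assumption on topological vector spaces (the paper's standing convention, and what makes $r\mapsto ra$ a topological isomorphism onto $\rhull{a}$, hence gives ``$\pi^A_a$ continuous $\Rightarrow$ $\phi_a$ continuous''), and the fact that the closure of a linear subspace of a topological vector space is again a linear subspace. Neither is a gap.
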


\begin{proposition}\label{prop:top:lin:ind:=equicontinuous}
Let $A$ be a linearly independent subset of a topological vector space $E$. The following conditions are equivalent:
\begin{itemize}
    \item [(i)]
    $A$ is topologically linearly independent.
    \item[(ii)]
    The collection $\{\pi_a^A\colon a\in A\}$ is equicontinuous.
\end{itemize}
\end{proposition}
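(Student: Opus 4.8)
The plan is to show that conditions (i) and (ii) are merely two phrasings of the same quantifier statement, once we unwind the action of the projections $\pi_a^A$ on a general element of $X:=\rhull{A}$. The key observation to record first is that, since $A$ is linearly independent, every $x\in X$ has a unique expression $x=\sum_{a\in F}r_aa$ with $F\subseteq A$ finite and $r_a\in\R$, and then, by the defining property of $\pi_a^A$ (the projection onto $\rhull{a}$ with kernel $\rhull{A\setminus\{a\}}$), we have $\pi_a^A(x)=r_aa$, with the convention $r_a=0$ when $a\notin F$. Viewing each $\pi_a^A$ as a linear map into the common space $X$, equicontinuity of the family then means precisely: for every neighborhood $W$ of $0$ in $X$ there is a neighborhood $U$ of $0$ with $\pi_a^A(U)\subseteq W$ for \emph{all} $a\in A$ simultaneously.

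For the implication (i)$\Rightarrow$(ii), I would fix a neighborhood $W$ of $0$ and let $U$ be a $W$-witness of the topological linear independence of $A$. For arbitrary $x\in U$, writing $x=\sum_{a\in F}r_aa$, the witness property yields $r_aa\in W$ for every $a\in F$, whence $\pi_a^A(x)=r_aa\in W$ for every $a\in A$ (the case $a\notin F$ being trivial, as then $\pi_a^A(x)=0$). Thus $\pi_a^A(U)\subseteq W$ for all $a$, which is exactly the equicontinuity of $\{\pi_a^A\colon a\in A\}$.

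For the reverse implication (ii)$\Rightarrow$(i), I would fix $W$ and use equicontinuity to obtain $U$ with $\pi_a^A(U)\subseteq W$ for all $a\in A$. Given finite $F\subseteq A$, reals $\{r_a\colon a\in F\}$, and $\sum_{a\in F}r_aa\in U$, applying $\pi_a^A$ gives, for each $a\in F$, the membership $r_aa=\pi_a^A\big(\sum_{b\in F}r_bb\big)\in W$. Hence $U$ is a $W$-witness, and $A$ is topologically linearly independent.

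The two arguments are nearly symmetric and become routine once the identity $\pi_a^A(\sum_b r_bb)=r_aa$ is in hand, so there is no serious obstacle. The one point deserving a careful statement is the precise meaning of equicontinuity for a family whose nominal codomains $\rhull{a}$ differ: one must read the $\pi_a^A$ as maps into the single space $X$, so that a single neighborhood $U$ is required to work uniformly in $a$; I would make this convention explicit at the outset. I would also note in one line that the neighborhoods in Definition \ref{def:top:lin:ind} may be taken in $X=\rhull{A}$ rather than in $E$, since all vectors involved lie in $X$, which renders the two conditions literally comparable.
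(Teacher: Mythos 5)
Your proposal is correct and follows essentially the same route as the paper: both rest on the identity $\pi_a^A\bigl(\sum_b r_b b\bigr)=r_a a$ coming from the unique representation of elements of $\rhull{A}$, after which a $W$-witness of topological linear independence and a uniform neighborhood witnessing equicontinuity of $\{\pi_a^A\colon a\in A\}$ are literally the same object. Your added care about reading the $\pi_a^A$ as maps into a common space and about taking neighborhoods in $\rhull{A}$ versus $E$ only makes explicit what the paper's terser proof leaves implicit.
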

\begin{proof}
It suffices to realize that given $x\in\rhull{A}$, there is a unique indexed set $\{r_a\colon a\in A\}$ of reals, where $r_a\neq 0$ only for finitely many $a\in A$, such that $$x=\sum_{a\in A}r_aa.$$  
Clearly, $$\pi^A_a(x)=r_aa.$$

Now, given a neighborhood $W$ of zero, the $W$-witness $U$ of the topological linear independence of $A$ is witnessing the equicontinuity of the collection $\{\pi^A_a\colon a\in A\}$ and vice versa.
\end{proof}

The next statement is a direct consequence of Propositions \ref{prop:lemma:3.2} and \ref{prop:top:lin:ind:=equicontinuous}.
\begin{corollary}\label{prop:lin:indep:is:semibasic}
Every topologically linearly independent set is minimal.
\end{corollary}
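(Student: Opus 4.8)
The plan is to obtain Corollary \ref{prop:lin:indep:is:semibasic} as an immediate formal consequence of the two preceding propositions, exactly as the surrounding text advertises. The statement asserts that every topologically linearly independent set $A$ is minimal, so the natural strategy is to chase the characterizations of these two properties through Propositions \ref{prop:lemma:3.2} and \ref{prop:top:lin:ind:=equicontinuous} and exploit the elementary logical fact that equicontinuity of a family of maps implies continuity of each individual member.

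First I would reduce to the case that $A$ is linearly independent, since both propositions take that as a standing hypothesis; this is free because Lemma \ref{lemma:finite:all:the:same} tells us that topological linear independence implies (ordinary) linear independence, so the projections $\pi_a^A\colon \rhull{A}\to\rhull{a}$ are well defined for every $a\in A$. Next I would invoke Proposition \ref{prop:top:lin:ind:=equicontinuous}: since $A$ is topologically linearly independent, condition (ii) there gives that the collection $\{\pi_a^A\colon a\in A\}$ is equicontinuous. Equicontinuity of a family is by definition a uniform form of continuity, so in particular each single projection $\pi_a^A$ is continuous. Finally I would apply Proposition \ref{prop:lemma:3.2} in the direction (ii)$\Rightarrow$(i): continuity of every $\pi_a^A$ is precisely condition (ii) there, and it yields that $A$ is minimal. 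Composing these three steps gives the claim.

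I do not expect any genuine obstacle here, since the corollary is purely a matter of linking two already-proved equivalences by the trivial implication ``equicontinuous family $\Rightarrow$ each member continuous.'' The only point requiring a word of care is that Proposition \ref{prop:lemma:3.2} and Proposition \ref{prop:top:lin:ind:=equicontinuous} both presuppose linear independence of $A$, so I would make that hypothesis explicit at the outset by citing Lemma \ref{lemma:finite:all:the:same}; without it the projections would not even be defined. Beyond that, the argument is a one-line deduction, which is why the paper labels it a direct consequence and why the implication (b) in the diagram was described above as a straightforward exercise.
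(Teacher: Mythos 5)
Your proof is correct and follows exactly the paper's intended route: the paper derives Corollary \ref{prop:lin:indep:is:semibasic} as a direct consequence of Propositions \ref{prop:lemma:3.2} and \ref{prop:top:lin:ind:=equicontinuous}, linked by the observation that an equicontinuous family consists of continuous maps. Your added care in first securing linear independence of $A$ via Lemma \ref{lemma:finite:all:the:same} (so that the projections $\pi_a^A$ are defined) is a point the paper leaves implicit, and it is handled correctly.
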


Example \ref{example:of:top:lin:ind} provides a typical topologically linearly independent set. As shown in Theorem \ref{thm:top:ind:top:lin:ind}, this is basically the only example of a topologically linearly independent set in a topological vector space with a weak topology.

\begin{example}\label{example:of:top:lin:ind}
Let $A$ be a set. For each $a\in A$, let $x_a\in\R^{(A)}$ satisfy $x_a(a)\neq 0$ and $x_a(b)=0$ for all $b\in A\setminus\{a\}$. The set $X=\{x_a\colon a\in A\}$ is topologically linearly independent, and $\rhull{X}$ is canonically isomorphic to the topological vector space $\R^{(A)}$.
\end{example}

\begin{theorem}\label{thm:top:ind:top:lin:ind}
Let $A$ be a subset of nonzero elements of a vector space $V$ equipped with a weak topology. The following statements are equivalent:
\begin{itemize}
    \item[(i)]
    $A$ is topologically linearly independent;
    \item[(ii)]
    $A$ is topologically independent;
    \item[(iii)]
    $A$ is absolutely Cauchy summable and minimal;
    \item[(iv)]
    The topologically group $\hull{A}$ is canonically isomorphic to the Tychonoff direct sum $\Z^{(A)}$;
    \item[(v)]
    The topological vector space $\rhull{A}$ is canonically isomorphic to the Tychonoff direct sum $\R^{(A)}$.
\end{itemize}
\end{theorem}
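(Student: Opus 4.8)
The plan is to prove the equivalences by first closing a cycle through the four ``structural'' conditions (i), (ii), (iv), (v), and then folding the mixed condition (iii) into that cycle by means of the Kalton map $\kal{A}$. Two standing observations make the bookkeeping clean. First, since $V$ carries a weak topology, so does every linear subspace, so $\rhull{A}$ is weak (this is seen by restricting the canonical embedding $\Delta_{f\in V^*}f\colon V\to\R^{V^*}$ to $\rhull{A}$, which exhibits the subspace topology as one generated by a separating family of continuous linear functionals, hence as a weak topology). Second, each $a\in A$ is a nonzero vector and therefore has infinite order, so $\hull{a}\cong\Z$ and the Tychonoff direct sum $\bigoplus_{a\in A}\hull{a}$ is canonically (via $\kal{A}$) the sum $\Z^{(A)}$; thus condition (iv) is just a restatement of ``$\hull{A}=\bigoplus_{a\in A}\hull{a}$''.

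For the main cycle I would argue as follows. The implication (i)$\Rightarrow$(ii) is Lemma \ref{lemma:finite:all:the:same}. For (ii)$\Rightarrow$(iv), since $\rhull{A}$ is weak, Theorem \ref{thm:positive:problem:solution:for:weak:top} applies to the topologically independent set $A$ and gives that $\hull{A}$ is the Tychonoff direct sum $\bigoplus_{a\in A}\hull{a}\cong\Z^{(A)}$. For (iv)$\Rightarrow$(v) I would invoke that $\Z^{(A)}$ has an invariant linear span by \cite[Corollary 3.5]{PS}: once $\hull{A}$ is topologically the direct sum $\Z^{(A)}$, its linear span inside a topological vector space is determined up to isomorphism, hence $\rhull{A}$ is canonically isomorphic to $\R^{(A)}$. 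Finally (v)$\Rightarrow$(i) is immediate from Example \ref{example:of:top:lin:ind}: under the canonical isomorphism $\rhull{A}\to\R^{(A)}$ the set $A$ is carried onto a set of the type considered there (each $a$ going to the corresponding unit vector), which is topologically linearly independent; as topological linear independence is preserved by isomorphisms of topological vector spaces, $A$ is topologically linearly independent.

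It then remains to insert (iii). The implication (iv)$\Rightarrow$(iii) is easy: if $\hull{A}$ is the Tychonoff direct sum, Theorem \ref{fact:from:DSS} gives that $A$ is absolutely Cauchy summable, while the already established (iv)$\Rightarrow$(i) together with Corollary \ref{prop:lin:indep:is:semibasic} gives that $A$ is minimal. For the converse (iii)$\Rightarrow$(iv) I would show directly that $\kal{A}\colon\bigoplus_{a\in A}\hull{a}\to\hull{A}$ is an isomorphism of topological groups. It is injective because minimality forces linear, hence $\Z$-, independence, and an independent set has injective Kalton map. It is open because minimality makes each projection $\pi^A_a$ continuous (Proposition \ref{prop:lemma:3.2}), and the restrictions $\pi^A_a|_{\hull{A}}\colon\hull{A}\to\hull{a}$ are exactly the coordinate maps of $\kal{A}^{-1}$ into $\prod_{a\in A}\hull{a}$; since the Tychonoff direct sum carries the subspace topology of this product, coordinatewise continuity yields continuity of $\kal{A}^{-1}$, i.e.\ openness of $\kal{A}$. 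Continuity of $\kal{A}$ itself comes from absolute Cauchy summability: given a neighborhood $N$ of the identity, choose $N_1$ with $N_1N_1\subseteq N$, use absolute Cauchy summability to find a finite $F\subseteq A$ with $\hull{A\setminus F}\subseteq N_1$, and then shrink the finitely many coordinates indexed by $F$ so that their product also lands in $N_1$; the resulting basic neighborhood of $0$ maps into $N_1N_1\subseteq N$. Hence $\kal{A}$ is a topological isomorphism and $\hull{A}$ is the Tychonoff direct sum, which is (iv).

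I expect the main obstacle to be the implication (iii)$\Rightarrow$(iv), where minimality and absolute Cauchy summability must be combined into the single statement that $\kal{A}$ is a topological isomorphism. The delicate points are the passage between the real-linear projections $\pi^A_a$ of Proposition \ref{prop:lemma:3.2} and the purely group-theoretic coordinate maps of $\kal{A}^{-1}$, and the verification that absolute Cauchy summability alone upgrades $\kal{A}$ from merely injective-and-open to continuous. A secondary, lighter point used throughout is that a subspace of a vector space with a weak topology again carries a weak topology, which I would justify via the canonical embedding into $\R^{V^*}$ as above.
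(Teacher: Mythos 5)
Your proposal is correct, and its core cycle is exactly the paper's: (i)$\Rightarrow$(ii) via Lemma \ref{lemma:finite:all:the:same}, (ii)$\Rightarrow$(iv) via Theorem \ref{thm:positive:problem:solution:for:weak:top}, (iv)$\Rightarrow$(v) via \cite[Corollary 3.5]{PS}, and (v)$\Rightarrow$(i) via Example \ref{example:of:top:lin:ind}. Where you genuinely diverge is in folding in condition (iii). The paper disposes of it in one line by citing \cite[Proposition 3.3]{PS} for the equivalence (iii)$\Leftrightarrow$(v); you instead prove (iii)$\Leftrightarrow$(iv) from scratch: the direction (iv)$\Rightarrow$(iii) by combining Theorem \ref{fact:from:DSS} (absolute Cauchy summability) with Corollary \ref{prop:lin:indep:is:semibasic} (minimality), and the direction (iii)$\Rightarrow$(iv) by showing the Kalton map $\kal{A}$ is a topological isomorphism --- injective because minimality forces ($\Z$-)independence, open because Proposition \ref{prop:lemma:3.2} makes the coordinate maps of $\kal{A}^{-1}$ continuous, and continuous by the standard $N_1+N_1\subseteq N$ argument from absolute Cauchy summability. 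This argument is sound (it is essentially the content of the cited \cite[Proposition 3.3]{PS}, and the continuity step reproves the relevant half of \cite[Theorem 5.1]{DSS}); what it buys is self-containedness within the paper's own toolkit, plus the observation that (iii)$\Leftrightarrow$(iv) needs no weak-topology hypothesis at all, only Hausdorffness --- whereas the paper's citation is of course far shorter. A further small merit of your write-up is that you make explicit two facts the paper uses silently: that a linear subspace of a space with weak topology again carries a weak topology (needed to apply Theorem \ref{thm:positive:problem:solution:for:weak:top} to $\rhull{A}$), and that each $\hull{a}$ is discrete infinite cyclic, so that $\bigoplus_{a\in A}\hull{a}$ really is $\Z^{(A)}$.
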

\begin{proof}
Item (i) implies (ii) by Lemma \ref{lemma:finite:all:the:same}. Assume (ii). 
By Theorem \ref{thm:positive:problem:solution:for:weak:top}, we have (iv).
The implication (iv)$\Rightarrow$(v) follows from \cite[Corollary 3.5]{PS}, and (v) implies (i) as shown in Example \ref{example:of:top:lin:ind}. 
Finally, the equivalence of (iii) and (v) is established in
\cite[Proposition 3.3]{PS}.  
\end{proof}

\begin{example}\label{ex:top:ind:not:top:lin:ind}
Consider the Banach space $\ell^{\infty}$ with $\|\cdot\|_{\infty}$-norm topology and its subset $A=\{e_1\}\cup\{e_1+e_n\colon \,n\in\N,\,n\geq 2\}$, where $e_i=(0,\ldots,0,\underbrace{1}_{i},0,\ldots)$ for every $i\in\N$. The set $A$ is topologically independent, but not minimal. In particular, according to Corollary \ref{prop:lin:indep:is:semibasic}, it is not topologically linearly independent.

Indeed, let $U$ be the open ball around $0_{\ell^{\infty}}$ with a radius of $1$. If $z_1e_1+\sum_{n=2}^kz_n(e_1+e_n)\in U$ for some $z_1,\ldots,z_k\in \Z$, then $$\max\left\{\left|\sum_{n=1}^kz_n\right|,|z_2|,\ldots,|z_k|\right\}<1.$$ Hence, $z_i=0$ for all $1\leq i\leq k$. Therefore, $U$ is a $W$-witness of the topological independence of $A$ for every neighborhood $W$ of $0_{\ell^{\infty}}$. 

On the other hand, if for any $\varepsilon>0$ we take $k\in\N$ such that $1/k<\varepsilon$, then $$e_1-\sum_{n=2}^{k+1}\frac 1k(e_1+e_n)=(0,\underbrace{-\frac 1k,\ldots,-\frac 1k}_{k\times},0\ldots)\in U(0_{\ell^{\infty}},\varepsilon).$$ Hence, $e_1\in\overline{\langle\{e_1+e_n\colon \,n\in\N,\,n\geq 2\}\rangle_{\R}}$ and $A$ is not minimal.
\end{example}

Our next example shows that the condition of absolute Cauchy summability cannot be omitted in item (iii) of Theorem \ref{thm:top:ind:top:lin:ind}.

\begin{example}\label{ex:minimal:not:top:ind}
Consider the Banach space $\ell^{\infty}$ with $w^\ast$-topology and its subset $A=\{e_1+e_n\colon \,n\in\N,\,n\geq 2\}$, where $e_i=(0,\ldots,0,\underbrace{1}_{i},0,\ldots)$ for every $i\in\N$. The set $A$ is minimal but not topologically independent. Indeed, let $n\geq 2$ and let $U=e_n^{-1}(-1,1)$ be a $w^\ast-$neighborhood of $0_{\ell^{\infty}}$ induced by functional $e_n\in\ell^1$. Then 
$$(e_1+e_n+U)\cap \langle \{e_1+e_i\colon \,i\in\N,\,i\geq 2, i\neq n\}\rangle_{\R}=\emptyset.$$ Hence, $e_1+e_n\notin \overline{\langle A\setminus\{e_1+e_n\}\rangle_{\R}}$ and $A$ is minimal. 

On the other hand, take the neighborhood $W=e_1^{-1}(-1,1)$ of $0_{\ell^{\infty}}$ given by the functional $e_1\in\ell^1$. We show that there is no $W$-witness for the topological independence of $A$. Let $\varphi_1=(y^1_1,y^1_2,\ldots),\ldots,\varphi_k=(y^k_1,y^k_2,\ldots)\in\ell^1$ and let $U=\bigcap_{i=1}^k\varphi_i^{-1}(-1,1)$. There exists $N\in\N$ such that $\sum_{n=N}^{\infty}|y^i_n|<1$ for all $1\leq i\leq k$. For $x=(e_1+e_N)-(e_1+e_{N+1})$, we get $|\varphi_i(x)|=|y^i_N-y^i_{N+1}|<1$ for every $1\leq i\leq k$. Hence $x\in U$ but $e_1+e_N\notin W$, which means that $U$ is not a $W$-witness of the topological independence of $A$.
\end{example}

\section{Topologically linearly independent sets in normed spaces}

In the first part of this section, we link topologically linearly independent sets with bounded biorthogonal systems and uniformly minimal sets. In order to do so, let us recall some necessary terminology.

Given a nonempty set $I$, a normed space $X$ and its dual $X^*$, a family $\{(x_i,x^*_i)\colon i\in I\}$ of pairs from $X\times X^*$ is called a {\em biorthogonal system}, provided that $x_i^*(x_j)=\delta_{ij}$ for all $i,j\in I$, where $\delta_{ij}$ is Kronecker $\delta$. This biorthogonal system is called {\em bounded} if there is positive $K\in\R$ such that $\norm{x_i}\cdot\norm{x_i^*}\le K$ for all $i\in I$. 

\begin{remark}\label{rem:bounded=unif:minimal}
    It is a simple application of the Hahn-Banach theorem that given a faithfully indexed set $\{x_i\colon i\in I\}$ in $X$ it is possible to find $x^*_i\in X^*$ for each $i\in I$ such that $\{(x_i,x^*_i)\colon i\in I\}$ forms a biorthogonal system if and only if the set $\{x_i\colon i\in I\}$ is minimal. 

Similarly, as remarked in \cite[Remark 1.24]{H}, the biorthogonal system above is bounded if and only if the set $\{x_i\colon i\in I\}$ is uniformly minimal in the sense of Definition \ref{def:unif:min}.
\end{remark}

\begin{definition}\label{def:unif:min}
    A subset $A$ of a normed space is {\em uniformly minimal} provided that there exists a positive $K\in\R$ such that for every $a\in A$ we have 
 \begin{equation}\label{eq:unif:minimal}
 \mbox{dist}\left( \frac{a}{\norm{a}},\overline{\langle A\setminus\{a\}\rangle_\R}\right)\ge K.
 \end{equation}
    
\end{definition}

\begin{remark}
    Uniformly minimal sequences in Banach spaces are studied, for example, in \cite{Ver, H}. They also appear in \cite{Singer}, however, the name ``uniformly minimal'' is suggested for a stronger property there (see \cite[Remark 7.2]{Singer}).
\end{remark}

It turns out that a subset of a normed space is topologically linearly independent if and only if it is uniformly minimal (Theorem \ref{thm:top:lin:ind:iff:unif:minimal}). This result is based on Proposition \ref{lemma:top:ind:in:banach}  which provides a simple description of topologically linearly independent sets in normed spaces. Its straightforward proof, which is based on the absolute homogeneity of a norm, is omitted. 
\begin{proposition}\label{lemma:top:ind:in:banach}
A subset $A$ of a normed space is topologically linearly independent if and only if there is positive $K\in\R$ such that for every finite $F\subset A$ and indexed set of reals $\{r_a\colon a\in F\}$ 
$$ \mbox{if } \norm{\sum_{a\in F}r_aa}<K,  \mbox{ then } ||r_aa||<1 \mbox{ for all } a\in F.$$
\end{proposition}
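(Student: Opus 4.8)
The plan is to prove both implications of Proposition~\ref{lemma:top:ind:in:banach} by exploiting the fact that in a normed space every neighborhood of $0$ contains, and is contained in, an open ball, so it suffices to test the $W$-witness condition from Definition~\ref{def:top:lin:ind} on balls. Concretely, I would fix the open balls $U(0,\delta)$ and $U(0,\rho)$ as the canonical neighborhoods of zero and reduce Definition~\ref{def:top:lin:ind} to: for every $\rho>0$ there is $\delta>0$ such that $\norm{\sum_{a\in F}r_aa}<\delta$ implies $\norm{r_aa}<\rho$ for all $a\in F$. The key observation that makes the proposition work is the \emph{absolute homogeneity} of the norm: if we scale all the coefficients $r_a$ by a common factor $t>0$, both the hypothesis $\norm{\sum r_aa}$ and each conclusion quantity $\norm{r_aa}$ scale by $t$. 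This scale-invariance is exactly what collapses the ``for every $\rho$'' quantifier in the definition down to a single constant $K$.

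First I would prove that the condition in the proposition implies topological linear independence. Assume the constant $K>0$ exists. Given an arbitrary neighborhood $W$ of $0$, choose $\rho>0$ with $U(0,\rho)\subseteq W$, and set $\delta=\rho K$. I claim $U(0,\delta)$ is a $W$-witness. Indeed, suppose $\norm{\sum_{a\in F}r_aa}<\delta=\rho K$. If all $r_a=0$ there is nothing to prove; otherwise put $s_a=r_a/\rho$, so that $\norm{\sum_{a\in F}s_aa}<K$. By the hypothesis of the proposition applied to the coefficients $\{s_a\}$, we get $\norm{s_aa}<1$ for all $a\in F$, i.e.\ $\norm{r_aa}<\rho$, whence $r_aa\in U(0,\rho)\subseteq W$. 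This is precisely the $W$-witness property.

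Conversely, I would show topological linear independence yields such a $K$. Apply Definition~\ref{def:top:lin:ind} with the specific neighborhood $W=U(0,1)$ to obtain a neighborhood $U$ of $0$, and pick $K>0$ with $U(0,K)\subseteq U$. Then for every finite $F$ and reals $\{r_a\}$ with $\norm{\sum_{a\in F}r_aa}<K$ we have $\sum_{a\in F}r_aa\in U(0,K)\subseteq U$, so by the $W$-witness property $r_aa\in W=U(0,1)$, i.e.\ $\norm{r_aa}<1$ for all $a\in F$, which is exactly the stated condition. This direction does not even require the homogeneity trick and is the easier half.

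I do not expect any serious obstacle here; the proposition is essentially a routine translation of the abstract neighborhood-based definition into metric language for the special case of normed spaces, and the paper itself flags the proof as omitted because it is straightforward. The only point demanding a little care is the forward implication, where one must correctly calibrate $\delta$ in terms of $\rho$ and $K$ and invoke homogeneity to rescale from the ball of radius $\delta$ to the ball of radius $K$; everything else is bookkeeping with the quantifiers of Definition~\ref{def:top:lin:ind}.
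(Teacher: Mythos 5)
Your proof is correct, and it follows exactly the route the paper indicates for its omitted proof: rescaling coefficients via the absolute homogeneity of the norm to collapse the quantifier over neighborhoods $W$ into the single constant $K$. Both directions check out (in the forward direction, $\bigl\|\sum_{a\in F}s_a a\bigr\|=\frac{1}{\rho}\bigl\|\sum_{a\in F}r_a a\bigr\|<K$ and the rescaling back is valid), so there is nothing to add.
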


\begin{theorem}\label{thm:top:lin:ind:iff:unif:minimal}
    A subset of a normed space is topologically linearly independent if and only if it is uniformly minimal.
\end{theorem}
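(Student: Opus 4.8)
The plan is to prove the equivalence by translating both sides into the common quantitative language provided by Proposition~\ref{lemma:top:ind:in:banach}. That proposition already strips topological linear independence down to a single uniform constant $K$: the set $A$ is topologically linearly independent precisely when there is $K>0$ so that $\norm{\sum_{a\in F}r_aa}<K$ forces $\norm{r_aa}<1$ for every $a\in F$ and every finite $F$. Uniform minimality (Definition~\ref{def:unif:min}) is likewise governed by a single constant. So the whole theorem should reduce to showing that these two constants control each other, and the cleanest route is to prove each implication by contraposition, exhibiting for each ``bad'' finite combination on one side a corresponding ``bad'' combination on the other.

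First I would prove that uniform minimality implies topological linear independence. Assume $A$ is uniformly minimal with constant $K$, so $\mathrm{dist}(a/\norm{a},\overline{\rhull{A\setminus\{a\}}})\ge K$ for all $a\in A$. Fix a finite $F\subset A$, reals $\{r_a\colon a\in F\}$, and suppose some coefficient is large, say $\norm{r_bb}\ge 1$ for a particular $b\in F$. Writing $x=\sum_{a\in F}r_aa$, the vector $x-r_bb$ lies in $\rhull{A\setminus\{b\}}$, hence $\norm{x-r_bb}\ge\mathrm{dist}(r_bb,\rhull{A\setminus\{b\}})=\norm{r_bb}\cdot\mathrm{dist}(b/\norm{b},\rhull{A\setminus\{b\}})\ge\norm{r_bb}\,K\ge K$ (the closure does not change the distance). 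By the triangle inequality $\norm{x}\ge\norm{x-r_bb}-\norm{r_bb}$ is the wrong direction, so instead I use $\norm{r_bb}=\norm{x-(x-r_bb)}\le\norm{x}+\norm{x-r_bb}$; the useful estimate is the lower bound $\norm{x}\ge \mathrm{dist}(r_bb,\rhull{A\setminus\{b\}})\ge K$, obtained directly since $x$ itself equals $r_bb$ plus an element of $\rhull{A\setminus\{b\}}$. Thus $\norm{\sum_{a\in F}r_aa}\ge K$ whenever some $\norm{r_bb}\ge 1$, which is exactly the criterion of Proposition~\ref{lemma:top:ind:in:banach}.

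For the converse, assume $A$ is topologically linearly independent with constant $K$ from Proposition~\ref{lemma:top:ind:in:banach}, and aim to show uniform minimality with a constant depending on $K$. Fix $b\in A$; I want to bound $\mathrm{dist}(b/\norm{b},\rhull{A\setminus\{b\}})$ below. Take any $y\in\rhull{A\setminus\{b\}}$, so $y=\sum_{a\in F}r_aa$ for some finite $F\subset A\setminus\{b\}$, and consider $b/\norm{b}-y$. Scaling by a suitable positive factor and invoking the contrapositive of the criterion (if the norm of a combination is $<K$ then every coefficient term has norm $<1$, in particular the coefficient of $b$ cannot be $1$) should force $\norm{b/\norm{b}-y}$ to stay bounded away from zero by a constant like $K$ or $K/(1+K)$. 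Concretely, if $\norm{b/\norm{b}-y}$ were small, then multiplying the whole expression $b/\norm{b}-\sum r_aa$ by a scalar to bring its norm just under $K$ would keep the $b$-coefficient at magnitude $1/\norm{b}$ times that scalar; choosing the scalar appropriately contradicts the requirement $\norm{r_bb}<1$ applied to the $b$-term. The main obstacle here is bookkeeping the normalization: one must verify the $b$-coefficient term $\norm{(1/\norm{b})\,b}=1$ is not $<1$, and choose the scaling factor so that the full combination lands strictly inside the $K$-ball; I expect the resulting uniform minimality constant to be essentially $K$ (or a simple function of it), and the only real care needed is handling the closure in Definition~\ref{def:unif:min}, which is harmless because distance to a subspace equals distance to its closure.

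The step I expect to be the genuine obstacle is the converse direction's scaling argument: making the choice of the positive multiplier precise so that the combination $b/\norm{b}-y$, rescaled, has norm just below $K$ while the coefficient attached to $b$ is forced to have term-norm at least $1$, thereby contradicting Proposition~\ref{lemma:top:ind:in:banach}. Everything else is triangle-inequality bookkeeping and the elementary observation that $\mathrm{dist}(a/\norm{a},\overline{S})=\mathrm{dist}(a/\norm{a},S)$ for a linear subspace $S$.
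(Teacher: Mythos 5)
Your proposal is correct and takes essentially the same route as the paper: both directions are funneled through Proposition~\ref{lemma:top:ind:in:banach}, with the uniform-minimality constant and the constant $K$ controlling each other via homogeneity of the norm and the identity $\mathrm{dist}(r_bb,S)=\norm{r_bb}\,\mathrm{dist}(b/\norm{b},S)$ for a subspace $S$. The one step you flag as the ``genuine obstacle'' is actually immediate and needs no rescaling at all: for $y\in\rhull{A\setminus\{b\}}$ the combination $\frac{b}{\norm{b}}-y$ already contains the term $\frac{b}{\norm{b}}$ of norm exactly $1$, which is not $<1$, so the contrapositive of Proposition~\ref{lemma:top:ind:in:banach} gives $\norm{\frac{b}{\norm{b}}-y}\ge K$ directly --- this is precisely the paper's argument, yielding the constant $K$ itself.
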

\begin{proof}
Assume that $A$ is a topologically linearly independent subset of a normed space, and let $K$ be as in Proposition \ref{lemma:top:ind:in:banach}. Pick $a\in A$ arbitrarily.  Since $\norm{\frac{a}{\norm{a}}}=1$, it follows from Proposition \ref{lemma:top:ind:in:banach} that $$\norm{\frac{a}{\norm{a}}-\sum_{b\in F}r_bb}\ge K$$ for every finite $F\subset A\setminus\{a\}$ and all reals $r_b$, where $b\in F$.  This gives us \eqref{eq:unif:minimal}, and $A$ is uniformly minimal.

To show the reversed implication, assume that $A$ is uniformly minimal and take $K$ as in \eqref{eq:unif:minimal}. Pick arbitrarily a finite $F\subset A$, and for each $a\in F$ a nonzero real number $r_a$ such that 
\begin{equation}\label{eq:sum:less:K}
\norm{\sum_{a\in F}r_aa}<K.    
\end{equation}

Fix $b\in F$ arbitrarily. From \eqref{eq:unif:minimal} and  \eqref{eq:sum:less:K} we get

$$K\le \mbox{dist}\left( \frac{b}{\norm{b}},\overline{\langle A\setminus\{b\}\rangle_\R}\right)\le\norm{\frac{r_bb}{\norm{r_bb}}+\sum_{a\in F\setminus\{b\}}\frac{r_a}{\norm{r_bb}}a}<\frac{K}{\norm{r_bb}}.$$

Multiplying the above inequalities by $\frac{\norm{r_bb}}{K}$ gives us $\norm{r_bb}<1$. Hence, $A$ is topologically linearly independent by Proposition \ref{lemma:top:ind:in:banach}.
\end{proof}

\begin{remark}\label{rem:card:of:top:lin:ind}
    In this remark, we estimate cardinalities of topologically linearly independent sets in some topological vector spaces.
    
    Recall that a {\em Markushevich basis} of a Banach space $X$ is a biorthogonal system $\{(x_i,x^*_i)\colon i\in I\}$ such that $\rhull{\{x_i\colon i\in I\}}$ is dense in $X$ and $\rhull{\{x^*_i\colon i\in I\}}$ is dense in $X^*$ in the $w^*$-topology. By Remark \ref{rem:bounded=unif:minimal} and Theorem \ref{thm:top:lin:ind:iff:unif:minimal}, Markushevich basis is bounded if and only if the set $\{x_i\colon i\in I\}$ is topologically linearly independent. 
\begin{itemize}
        \item[(i)] Every infinite-dimensional Banach space contains a countably infinite topologically linearly independent set, as it contains a bounded Markushevich basis by a result of Pe\l czy\'{n}ski \cite[Theorem 1]{Pelc}. See also \cite{PL1}.
                
        \item [(ii)] Under the continuum hypothesis, there is a non-separable Banach space $X$ such that every minimal (in particular, every topologically linearly independent) set in $X$  is countable. This
is directly based on Kunen’s result, which can be found in \cite[Theorem 7.7]{Negrepontis}. 
        
        \item[(iii)] To find topologically linearly independent sets of uncountable cardinality in some Banach spaces, we can use the result of Plichko from \cite{PL}, which states that every non-separable Banach space with a Markushevich basis contains a bounded Markushevich basis (see also \cite[Theorem 5.13]{H}).

        \item[(iv)] As we will see in Proposition \ref{PUB}, there are infinite-dimensional topological vector spaces such that every topologically linearly independent set is finite.
    \end{itemize}
\end{remark}

As usual, given a normed space $X$, we denote by $w$ the topology of $X$ generated by all continuous functionals on $X$.

\begin{proposition}\label{PUB}
Let $X$ be a normed space. Every topologically linearly independent subset of $(X,w)$ is finite.
\end{proposition}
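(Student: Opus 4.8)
The plan is to argue by contradiction: I assume that $A\subseteq (X,w)$ is topologically linearly independent and infinite, and I produce a single continuous functional on $X$ that is incompatible with this assumption. First I would observe that $w$ is a weak topology in the sense of this paper (it is generated by $X^*$, and the continuous functionals of $(X,w)$ are exactly $X^*$), so $A$ falls under the scope of Theorem \ref{thm:top:ind:top:lin:ind}. Being topologically linearly independent, $A$ is linearly independent by Lemma \ref{lemma:finite:all:the:same} and hence consists of nonzero elements; thus the implication (i)$\Rightarrow$(iii) of Theorem \ref{thm:top:ind:top:lin:ind} applies and shows that $A$ is simultaneously \emph{absolutely Cauchy summable} and \emph{minimal} in $(X,w)$.

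The first key step extracts a finiteness condition on functionals from absolute Cauchy summability. Fix $f\in X^*$ and apply the summability condition \eqref{eq5} to the weak neighborhood $V=\{x\in X\colon |f(x)|<1\}$: there is a finite $F\subseteq A$ with $\hull{A\setminus F}\subseteq V$. For any $b\in A\setminus F$ and any integer $z$ we then have $zb\in\hull{A\setminus F}\subseteq V$, so $|z|\,|f(b)|=|f(zb)|<1$ for all $z\in\Z$, which forces $f(b)=0$. Hence every $f\in X^*$ satisfies
\[
\{a\in A\colon f(a)\neq 0\}\subseteq F \text{ and is therefore finite;}
\]
that is, each continuous functional on $X$ is nonzero on only finitely many members of $A$.

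The second key step contradicts this for infinite $A$, and it is where the normed structure of $X$ (beyond being a topological vector space) really enters. Since $A$ is minimal, Remark \ref{rem:bounded=unif:minimal} supplies, via the Hahn--Banach theorem, a biorthogonal system $\{(a,f_a)\colon a\in A\}$ with $f_a\in X^*$ and $f_a(b)=\delta_{ab}$; in particular $f_a(a)=1$, so $\norm{f_a}>0$. Choosing distinct $a_1,a_2,\dots\in A$ (possible as $A$ is infinite), I would form
\[
g=\sum_{n=1}^{\infty}\frac{1}{2^{n}\norm{f_{a_n}}}\,f_{a_n}.
\]
Because $X^*$ is a Banach space and the $n$-th summand has norm $2^{-n}$, the series converges in $X^*$, so $g\in X^*$. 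For each fixed $m$, biorthogonality makes every partial-sum evaluation at $a_m$ stabilize at the single surviving coefficient, giving $g(a_m)=\tfrac{1}{2^{m}\norm{f_{a_m}}}\neq 0$ for all $m$. Thus $g$ is nonzero on infinitely many elements of $A$, contradicting the finiteness condition of the previous step. Consequently $A$ must be finite.

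I expect the decisive step to be the construction of $g$. The subtlety is conceptual rather than computational: the finite-support condition is forced on \emph{every individual} functional, yet the completeness of $X^*$ together with biorthogonality allows one to amalgamate countably many biorthogonal functionals into one functional that ``sees'' an entire countable subfamily of $A$ at once. The only technical points needing care are the convergence of the series in $X^*$ (guaranteed by the normalization $2^{-n}/\norm{f_{a_n}}$) and the justification that $g(a_m)$ equals the $m$-th coefficient, which holds because the partial-sum evaluations at $a_m$ are eventually constant by biorthogonality.
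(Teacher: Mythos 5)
Your proof is correct, but it follows a genuinely different route from the paper's. The paper also begins by invoking Theorem \ref{thm:top:ind:top:lin:ind}, but it uses only the absolute Cauchy summability half of condition (iii): choosing scalars $r_n$ with $\norm{r_na_n}>n$, it notes that summability in the weak topology forces $(r_na_n)_{n\in\N}$ to converge weakly to zero (this rests on exactly the observation you make in your first key step, that each $f\in X^*$ must vanish on all but finitely many elements of $A$), and then it appeals to the uniform boundedness principle --- a weakly convergent sequence is norm-bounded --- to contradict $\norm{r_na_n}>n$. You instead use both halves of (iii): summability gives you the finite-support property of every functional, while minimality gives you (via Hahn--Banach, Remark \ref{rem:bounded=unif:minimal}) a biorthogonal system $\{(a,f_a)\colon a\in A\}$, from which you assemble, by an absolutely convergent series in the Banach space $X^*$, a single functional $g$ that is nonzero on infinitely many elements of $A$ --- a direct contradiction. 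Both arguments ultimately rest on the completeness of $X^*$: the paper through Banach--Steinhaus applied to $(r_na_n)$ viewed in $X^{**}$ acting on $X^*$, you through the norm-convergence of an absolutely convergent series in $X^*$. Your route is more constructive, exhibiting the offending functional explicitly and avoiding the Baire-category machinery behind the uniform boundedness principle, at the price of being longer and of needing minimality; there you should note explicitly (it is harmless, since closures only grow when passing from the norm to the weak topology) that minimality of $A$ in $(X,w)$ implies minimality in the normed space $X$, which is the setting of Remark \ref{rem:bounded=unif:minimal}. The paper's route is shorter and needs nothing beyond summability and a standard corollary of Banach--Steinhaus.
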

\begin{proof}
Assume, for contradiction, that there is an infinite faithfully indexed set $A\colon =\{a_n\colon n\in\N\}\subset (X,w)$ that is topologically linearly independent. For each $n\in\N$ choose $r_n\in\R$ such that 
\begin{equation}\label{eq;big:norm}
\norm{r_na_n}>n.    
\end{equation}
 This is possible because all $a_n$'s are nonzero, being elements of a topologically linearly independent set.   Since the topology $w$ is weak, it follows from Theorem \ref{thm:top:ind:top:lin:ind} that the set $A$ is absolutely Cauchy summable. Therefore, for each neighborhood $V$ of zero in $(B,w)$, there is finite $F\subset A$ such that \eqref{eq5} holds. It follows that the sequence $(r_na_n)_{n\in\N}$ converges weakly to zero. Thus, it is norm-bounded by the uniform boundedness principle. This contradicts \eqref{eq;big:norm}.  
\end{proof}

\begin{example}
Continuous linear injections do not necessarily preserve topological linear independence. 
\end{example}
\begin{proof}
Let $B$ be an infinite-dimensional Banach space with a bounded Markushevich basis $A$. Then $A$ is an infinite topologically linearly independent subset of $B$ by Remark \ref{rem:card:of:top:lin:ind}.
Consider the identity map $\iota\colon (B,|| \hspace{0.1cm} ||)\to (B,w)$. Then $\iota$ is a continuous linear injection, and $\iota(A)$ being infinite is not topologically linearly independent in $(B,w)$ by Proposition~\ref{PUB}.
\end{proof}

We close this paper with an observation that combining topological linear independence with a property of being a free basis ends up with the notion of a (long) Schauder basis of the classical Banach space $\ell^1(N)$ of all elements $x=(x_n)_{n\in N}$ such that $x_n$ is nonzero for at most countably many $n\in N$ and $\sum_{n\in N}|x_n|$ converges unconditionally to the norm of $x$.

Recall that, as in \cite{PS2},  a {\em free basis} $N$ of a Banach space $X$ is a closed subset of $X$ containing $0_X$ with the property that for every Banach space $Y$ and every Lipschitz map $f\colon N\to Y$ such that $f(0_X)=0_Y$ there exists unique bounded linear extension $\hat{f}\colon X\to Y$ of $f$.

\begin{theorem}\label{thm:free:basis:top:lin:ind:l1}
    If $M$ is a topologically linearly independent subset of a Banach space $X$ and $M\cup\{e_X\}$ is a free basis of $X$, then there is a canonical isomorphism $\phi\colon X\to \ell^1(N)$, where $N=\left\{\frac{m}{\norm{m}}\colon m\in M\right\}$. In particular,  $M$ is a (long)  Schauder basis of $X$.
\end{theorem}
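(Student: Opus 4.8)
The plan is to build the isomorphism $\phi$ and its inverse $\psi$ directly from the two relevant universal properties --- that of the free basis $M\cup\{e_X\}$ of $X$ and the defining universal property of $\ell^1(N)$ as the free Banach space over the set $N$ --- and to glue them together by means of uniform minimality.

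First I would record that, by Theorem \ref{thm:top:lin:ind:iff:unif:minimal}, the set $M$ is uniformly minimal, so there is a constant $K>0$ (which we may take $\le 1$, since distances to a subspace containing $0_X$ are at most one) with $\mathrm{dist}(m/\norm m,\overline{\rhull{M\setminus\{m\}}})\ge K$ for every $m\in M$; in particular $M$ is linearly independent by Lemma \ref{lemma:finite:all:the:same}, so $m\mapsto m/\norm m$ is a bijection of $M$ onto $N$. The technical heart of the argument, which I expect to be the main obstacle, is the observation that uniform minimality forces the induced metric on $M\cup\{e_X\}$ to be $\ell^1$-like. Indeed, for distinct $m,m'\in M$ one writes $m'/\norm m=(\norm{m'}/\norm m)(m'/\norm{m'})\in\rhull{M\setminus\{m\}}$ and applies the distance bound to get $\norm{m-m'}=\norm m\cdot\norm{m/\norm m-m'/\norm m}\ge K\norm m$; symmetrically $\norm{m-m'}\ge K\norm{m'}$, whence $\norm{m-m'}\ge\tfrac K2(\norm m+\norm{m'})$, while trivially $\norm{m-m'}\le\norm m+\norm{m'}$.

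Next I would define $f\colon M\cup\{e_X\}\to\ell^1(N)$ by $f(e_X)=0$ and $f(m)=\norm m\,e_{m/\norm m}$, where $(e_n)_{n\in N}$ are the canonical unit vectors of $\ell^1(N)$. For distinct $m,m'$ the images lie on different axes, so $\norm{f(m)-f(m')}=\norm m+\norm{m'}\le\tfrac2K\norm{m-m'}$ by the estimate above, and the base-point case $\norm{f(m)-f(e_X)}=\norm m=\norm{m-e_X}$ is immediate; thus $f$ is Lipschitz. The free basis property then supplies a unique bounded linear extension $\phi\colon X\to\ell^1(N)$ with $\phi(m)=\norm m\,e_{m/\norm m}$, hence $\phi(m/\norm m)=e_{m/\norm m}$ for each $m\in M$. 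In the reverse direction, since every normalized vector $n=m/\norm m$ has norm one, the universal property of $\ell^1(N)$ yields a unique bounded linear $\psi\colon\ell^1(N)\to X$ with $\psi(e_n)=n$ and $\norm\psi\le1$.

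Finally I would verify that $\phi$ and $\psi$ are mutually inverse. On $M$ we have $\psi(\phi(m))=\norm m\,\psi(e_{m/\norm m})=\norm m\cdot(m/\norm m)=m$, so $\psi\circ\phi$ is a bounded linear extension of the inclusion $M\cup\{e_X\}\hookrightarrow X$; since $\mathrm{id}_X$ is another such extension, the uniqueness clause of the free basis property gives $\psi\circ\phi=\mathrm{id}_X$. On the unit vectors $\phi(\psi(e_n))=\phi(m/\norm m)=e_n$, and because $\{e_n\colon n\in N\}$ spans a dense subspace of $\ell^1(N)$ and $\phi\circ\psi$ is continuous, $\phi\circ\psi=\mathrm{id}_{\ell^1(N)}$. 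Hence $\phi$ is the asserted canonical isomorphism. The ``in particular'' follows since $\psi$ carries the canonical unconditional basis $(e_n)_{n\in N}$ of $\ell^1(N)$ onto $N$, so $N$ --- and therefore $M$, obtained from $N$ by the positive rescalings $n\mapsto\norm m\,n$ --- is a long Schauder basis of $X$.
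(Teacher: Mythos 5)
Your proof is correct, but it takes a genuinely different route from the paper's. The paper's proof is citation-driven: it first normalizes, invoking \cite[Lemma 3.1]{PS2} to see that $N\cup\{e_X\}$ is again a free basis, notes that $N$ is topologically linearly independent and hence uniformly minimal by Theorem \ref{thm:top:lin:ind:iff:unif:minimal}, concludes that $N$ is uniformly discrete and bounded, hence bi-Lipschitz equivalent to an equilateral metric space (all distances $2$), and then finishes by citing Weaver's computation of the Lipschitz-free space of such a space \cite[Example 3.10]{W2} together with the dictionary between free bases and Lipschitz-free spaces from \cite[Observation 2.2]{PS2}. You instead work with $M$ itself and build the isomorphism by hand: the estimate $\norm{m-m'}\ge \frac{K}{2}\left(\norm{m}+\norm{m'}\right)$ extracted from uniform minimality is exactly what makes your map $m\mapsto\norm{m}\,e_{m/\norm{m}}$ Lipschitz, the free basis property of $M\cup\{e_X\}$ linearizes it to $\phi$, the universal property of $\ell^1(N)$ as the free Banach space over a set produces $\psi$, and the two uniqueness clauses (plus density of the span of the unit vectors) give $\psi\circ\phi=\mathrm{id}_X$ and $\phi\circ\psi=\mathrm{id}_{\ell^1(N)}$. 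In effect you have unpacked and reproved the two external results the paper leans on; what this buys is a self-contained argument with an explicit canonical isomorphism and explicit norm control ($\norm{\phi}\le 2/K$, $\norm{\psi}\le 1$), and it even avoids the renormalization lemma, since your metric estimate handles the unnormalized set $M$ directly. What the paper's route buys is brevity and placement of the theorem inside the established theory of Lipschitz-free spaces. Both proofs share the same pivot, namely Theorem \ref{thm:top:lin:ind:iff:unif:minimal}, and both derive the ``in particular'' clause the same way, by transporting the canonical basis of $\ell^1(N)$ through the isomorphism.
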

\begin{proof}
   From \cite[Lemma 3.1]{PS2}, it follows that $N\cup\{e_X\}$ is a free basis of $X$. From the definition, the set $N$ is topologically linearly independent. Consequently, according to Theorem \ref{thm:top:lin:ind:iff:unif:minimal}, it is uniformly minimal. Hence, $N$ is uniformly discrete by \eqref{eq:unif:minimal}. Since $N$ is also bounded as all its elements have norm 1, it is bi-Lipschitz equivalent to a metric space in which all elements have a distance equal to 2. To conclude the proof, it remains to apply \cite[Example 3.10]{W2} together with a dictionary from \cite[Observation 2.2]{PS2}.
\end{proof}

\medskip
\begin{flushleft}
\textbf{Acknowledgment}
\end{flushleft}
\smallskip
The author would like to thank Eva Perneck\'{a} for providing Examples
\ref{ex:top:ind:not:top:lin:ind} and \ref{ex:minimal:not:top:ind}.

\end{document}